\documentclass[letterpaper, 12pt]{article}

\usepackage{mhfig} 
\usepackage{shuffle}

\def\2{{\mhpastefig[2/3]{tree11}}}
\def\3{{\!\mhpastefig[1/2]{tree12}}}
\usepackage[latin1]{inputenc}  
\usepackage{amssymb} 
\usepackage{amsmath}
\usepackage{amsthm}
\usepackage{txfonts}
\usepackage{subfigure}
\usepackage{graphicx}
\usepackage{bm} 
\usepackage{bbm}
\usepackage[normalem]{ulem}
\usepackage{url}
\usepackage{soul}
\usepackage{exscale}
\usepackage[usenames,dvipsnames]{xcolor} 
\usepackage{hyperref}
\usepackage{verbatim}
\usepackage[usenames,dvipsnames]{xcolor}
\usepackage{comment}
\usepackage{epsfig}

\textwidth=6.0in \textheight=8.8in \hoffset=-0.2in
\voffset=-0.85in
\parskip=6pt
\baselineskip=9pt
\topmargin 0.8in


\newtheorem{definition}{Definition}[section]
\newtheorem{lemma}{Lemma}[section]
\newtheorem{remark}{Remark}[section]
\newtheorem{example}{Example}[section]
\newtheorem{theorem}{Theorem}[section]
\newtheorem{cor}{Corollary}[section]

\numberwithin{equation}{section}
\numberwithin{figure}{section}

\newcommand{\assign}{:=}
\newcommand{\backassign}{=:}

\newcommand{\tmop}[1]{\ensuremath{\operatorname{#1}}}
\newcommand{\E}{\mathbb{E}}
\newcommand{\R}{\mathbb{R}}

\newcommand{\beas}{\begin{eqnarray*}}
\newcommand{\eeas}{\end{eqnarray*}}
\newcommand{\bea}{\begin{eqnarray}}
\newcommand{\eea}{\end{eqnarray}}
\newcommand{\ben}{\begin{enumerate}}
\newcommand{\een}{\end{enumerate}}

\newcommand{\dm}{\diamond}


\newcommand{\tmF}{ {\mathbb{F}}}
\newcommand{\mG}{ {\mathbb{G}}}

\newcommand{\Y}{Y}

\newcommand{\cA}{\mathcal{A}}

\newcommand{\cF}{\mathcal{F}}
\newcommand{\mF}{\mathbb{F}}
\newcommand{\mK}{\mathbb{K}}
\newcommand{\mT}{\mathbb{T}}

\newcommand{\eps}{\epsilon}
\newcommand{\RR}{\mathbb{R}}

\newcommand{\bi}{\begin{itemize}}
\newcommand{\ei}{\end{itemize}}
\newcommand{\beq}{\begin{equation}}
\newcommand{\eeq}{\end{equation}}

\newcommand{\ee}[1]{\ensuremath{\mathbb{E}\left[{#1}\right]}}

\newcommand{\eet}[1]{\mathbb{E}_t\left[#1\right]}
\newcommand{\eef}[1]{\mathbb{E}_t\left[#1\right]}

\newcommand{\angl}[1]{\langle{#1}\rangle}


%
\usepackage{tikz}
\usepackage{mhequ} 
\usetikzlibrary{snakes}
\usetikzlibrary{decorations}
\usetikzlibrary{positioning}
\usetikzlibrary{shapes}
\usetikzlibrary{external}
\tikzexternalize[prefix=tikz/]

%
%
%
%

\makeatletter
\pgfdeclareshape{crosscircle}
{
\inheritsavedanchors[from=circle] 
\inheritanchorborder[from=circle]
\inheritanchor[from=circle]{north}
\inheritanchor[from=circle]{north west}
\inheritanchor[from=circle]{north east}
\inheritanchor[from=circle]{center}
\inheritanchor[from=circle]{west}
\inheritanchor[from=circle]{east}
\inheritanchor[from=circle]{mid}
\inheritanchor[from=circle]{mid west}
\inheritanchor[from=circle]{mid east}
\inheritanchor[from=circle]{base}
\inheritanchor[from=circle]{base west}
\inheritanchor[from=circle]{base east}
\inheritanchor[from=circle]{south}
\inheritanchor[from=circle]{south west}
\inheritanchor[from=circle]{south east}
\inheritbackgroundpath[from=circle]
\foregroundpath{
\centerpoint%
\pgf@xc=\pgf@x%
\pgf@yc=\pgf@y%
\pgfutil@tempdima=\radius%
\pgfmathsetlength{\pgf@xb}{\pgfkeysvalueof{/pgf/outer xsep}}%
\pgfmathsetlength{\pgf@yb}{\pgfkeysvalueof{/pgf/outer ysep}}%
\ifdim\pgf@xb<\pgf@yb%
  \advance\pgfutil@tempdima by-\pgf@yb%
\else%
  \advance\pgfutil@tempdima by-\pgf@xb%
\fi%
\pgfpathmoveto{\pgfpointadd{\pgfqpoint{\pgf@xc}{\pgf@yc}}{\pgfqpoint{-0.707107\pgfutil@tempdima}{-0.707107\pgfutil@tempdima}}}
\pgfpathlineto{\pgfpointadd{\pgfqpoint{\pgf@xc}{\pgf@yc}}{\pgfqpoint{0.707107\pgfutil@tempdima}{0.707107\pgfutil@tempdima}}}
\pgfpathmoveto{\pgfpointadd{\pgfqpoint{\pgf@xc}{\pgf@yc}}{\pgfqpoint{-0.707107\pgfutil@tempdima}{0.707107\pgfutil@tempdima}}}
\pgfpathlineto{\pgfpointadd{\pgfqpoint{\pgf@xc}{\pgf@yc}}{\pgfqpoint{0.707107\pgfutil@tempdima}{-0.707107\pgfutil@tempdima}}}
}
}
\makeatother

\def\A{\tikz[baseline=-2.8,scale=0.15]{\node[A] {};}}
\def\X{\tikz[baseline=-2.8,scale=0.15]{\node[X] {};}}
\def\Z{\tikz[baseline=-2.8,scale=0.15]{\node[Z] {};}}

\def\AAd{\tikz[baseline=-1,scale=0.15]{\draw (-1,1) node[A] {} -- (0,0) node[not] {} -- (1,1) node[A] {};}} 
\def\AXd{\tikz[baseline=-1,scale=0.15]{\draw (-1,1) node[A] {} -- (0,0) node[not] {} -- (1,1) node[X] {};}} 
\def\AZd{\tikz[baseline=-1,scale=0.15]{\draw (-1,1) node[A] {} -- (0,0) node[not] {} -- (1,1) node[Z] {};}} 
 
\def\ZZd{\tikz[baseline=-1,scale=0.15]{\draw (-1,1) node[Z] {} -- (0,0) node[not] {} -- (1,1) node[Z] {};}}

\def\AAdAAdd{\tikz[baseline=-1,scale=0.15]{\draw (0,0) node[not] {} -- (-1,1) node[not] {};
\draw (0,0) -- (1,1) node[not] {};
\draw (-1,1) -- (-1.5,2.5) node[A] {};
\draw (-1,1) -- (-0.5,2.5) node[A] {};
\draw (1,1) -- (0.5,2.5) node[A] {};
\draw (1,1) -- (1.5,2.5) node[A] {};}}

\def\AAdAdAd{\tikz[baseline=-1,scale=0.15]{\draw (0,0) node[not] {} -- (-1,1) node[not] {}
-- (-2,2) node[not]{} -- (-3,3) node[A]  {};
\draw (0,0) -- (1,1) node[A] {};
\draw (-1,1) -- (0,2) node[A] {};
\draw (-2,2) -- (-1,3) node[A] {};}}

\def\AXdAdAd{\tikz[baseline=-1,scale=0.15]{\draw (0,0) node[not] {} -- (-1,1) node[not] {}
-- (-2,2) node[not]{} -- (-3,3) node[A]  {};
\draw (0,0) -- (1,1) node[A] {};
\draw (-1,1) -- (0,2) node[X] {};
\draw (-2,2) -- (-1,3) node[A] {};}}

\def\AAdAd{\tikz[baseline=-1,scale=0.15]{\draw (0,2) node[A] {} -- (-1,1) ; \draw (-2,2)  node[A] {} -- (-1,1) ; \draw (-1,1)  node[not] {} -- (0,0); 
\draw (0,0) node[not] {}  -- (1,1) node[A] {};}}


\def\AAdAddAAd{\tikz[baseline=-1,scale=0.15]{
\draw (0,0) node[not] {} -- (-1,1) node[not] {}
-- (-2,2) node[not]{} -- (-3,3) node[A]  {};
\draw (-1,1) -- (-.5,2) node[A] {};
\draw (-2,2) -- (-1,3) node[A] {};
\draw (0,0) -- (1,1) node[not] {};
\draw (1,1) -- (2,2) node[A] {};
\draw (1,1) -- (.5,2) node[A] {};
}}

\def\AAdAdAdAd{\tikz[baseline=-1,scale=0.15]{
\draw (0,0) node[not] {} -- (-1,1) node[not] {}
-- (-2,2) node[not]{} -- (-3,3) node[not]  {} 
-- (-4,4) node[A]{};
\draw (0,0) -- (1,1) node[A] {};
\draw (-1,1) -- (0,2) node[A] {};
\draw (-2,2) -- (-1,3) node[A] {};
\draw (-3,3) -- (-2,4) node[A] {};}}

\def\AAdAAddAd{\tikz[baseline=-1,scale=0.15]{
\draw (1,0) node[not] {} -- (0,1) node[not] {};
\draw (1,0) node[not] {} -- (2,1) node[A] {};
\draw (0,1) node[not] {} -- (-1,2) node[not] {};
\draw (0,1) -- (1,2) node[not] {};
\draw (-1,2) -- (-1.5,3.5) node[A] {};
\draw (-1,2) -- (-0.5,3.5) node[A] {};
\draw (1,2) -- (0.5,3.5) node[A] {};
\draw (1,2) -- (1.5,3.5) node[A] {};}}




\colorlet{symbols}{blue!90!black}
\colorlet{testcolor}{green!60!black}
\colorlet{connection}{red!30!black}

\tikzset{
root/.style={circle,fill=black!50,inner sep=0pt, minimum size=3mm},
    dot/.style={circle,fill=black,inner sep=0pt, minimum size=1.5mm},
    A/.style={very thin,circle,fill=black!10,draw=black,inner sep=0pt,minimum size=1.2mm}, 
    X/.style={very thin,crosscircle,fill=black!10,draw=black,inner sep=0pt,minimum size=1.2mm}, 
    Z/.style={very thin,circle,fill=black!100,draw=black,inner sep=0pt,minimum size=1.2mm}, 
not/.style={thin,circle,fill=symbols,draw=connection,fill=connection,inner sep=0pt,minimum size=0.35mm},
	>=stealth,
    }


\begin{document}

\title{Forests, cumulants, martingales}
\author{
Peter K. Friz\footnote{TU and WIAS Berlin, friz@math.tu-berlin.de (corresponding author)}, Jim Gatheral\footnote{Baruch College, CUNY, jim.gatheral@baruch.cuny.edu} \ and Rado\v{s} Radoi\v{c}i\'c\footnote{Baruch College, CUNY, rados.radoicic@baruch.cuny.edu}} 
\date{}
\maketitle\thispagestyle{empty}
\begin{abstract}
This work is concerned with forest and cumulant type expansions of general random variables on a filtered probability spaces.  
We establish a ``broken exponential martingale" expansion that generalizes and unifies the exponentiation result of Al{\`o}s, Gatheral, and Radoi{\v c}i\'c (SSRN'17; \cite{alos2020exponentiation}) and the cumulant recursion formula of Lacoin, Rhodes, and Vargas (arXiv; \cite{lacoin2019probabilistic}). 
Specifically, we exhibit the two previous results as lower dimensional projections of the same generalized forest expansion, subsequently related by forest reordering.  Our approach also leads to sharp integrability conditions  for validity of the cumulant formula, as required by many of our examples, including  iterated stochastic integrals, L\'evy area, Bessel processes, KPZ with smooth noise, Wiener-It\^o chaos and ``rough'' stochastic (forward) variance models. 
\medskip

\noindent {\it Keywords:} {forests, trees, continuous martingales, diamond product, cumulants, moments, Hermite polynomials, regular perturbation, KPZ type (Wild) expansion, trees, L\'evy area, Wiener chaos, Heston and forward variance models}; \ {\it MSC2020 Class:}  {60G44, 60H99, 60L70}.
\end{abstract}


\thispagestyle{empty}

\date{}

\section{Introduction}

\subsection{Statements of main results}

Consider a filtered probability space $(\Omega, \cF_T, (\cF_t)_{0 \le  t \le T};\mathbb{P})$, on which all martingales admit a continuous version.
(It\^o's representation theorem, e.g. \cite[Ch.V.3.]{revuz2013continuous}, states that this holds true for Brownian filtration which covers all situations we have in mind.) 
Throughout, $T \in (0,\infty]$ should be thought of as a fixed parameter. 

Let $A_T$ be $\cF_T$-measurable. Define, assuming sufficient integrability, 
$$ X_t := \log \mathbb{E}_t e^{A_T}, \ \ \ Y_t :=  \mathbb{E}_t A_T \ . $$
By construction, $X, Y$ have equal terminal value $X_T = Y_T = A_T$, and $e^X, Y$ are martingales.
Motivated by financial applications, in  \cite{alos2020exponentiation} an $\mathbb{F}$ {\it (forest) expansion} was given of the form\footnote{Corollary 3.1 in \cite{alos2020exponentiation} is an expansion of the characteristic function, with $z=i\xi$.}
\[ 
\mathbb{E}_t e^{z A_T} =  \mathbb{E}_t e^{z X_T} =  e^{z X_t + \tfrac12 z (z -1)  \mathbb{E}_t \langle X \rangle_{t,T} +  \cdots} \equiv
\exp \left( z X_t +  \tfrac12 z (z -1) (X \dm X)_t(T)  + \sum_{k \ge 2} \mathbb{F}_t^k (T;z) \right) \]
with quadratic recursion for the $\mathbb{F}$'s, homogenous in $X$ but not in $z$, representable as forests. 
But $A_T$ is also the terminal value of the martingale $Y$ so that 
\[
\mathbb{E}_t e^{z A_T} = \mathbb{E}_t e^{z Y_T} = e^{z Y_t + \tfrac12 z^2 \mathbb{E}_t Y^2_{t,T} + \tfrac{1}{3!} z^3 \mathbb{E}_t Y^3_{t,T} + \tfrac{1}{4!} z^4 ( \mathbb{E}_t Y^4_{t,T}
- 3(\mathbb{E}_t Y^2_{t,T})^2) + \cdots}  \equiv \exp \left( z Y_t + \sum_{n \ge 2} \mathbb{K}_t^n (T;z) \right) , \]
the (time-$t$) conditional $\mathbb{K}$ {\it (cumulant}, German: {\it Kumulanten) expansion} of $A_T = Y_T$. A somewhat similar quadratic recursion for the $\mathbb{K}$'s, homogenous in $z$ (equivalently: $Y$) was later 
obtained in \cite{lacoin2019probabilistic}, stated in the unconditional case, and motivated by applications in QFT,
and independently in a first version of this paper, when revisiting the convergence
properties of the $\mathbb{F}$-series. (Initially, the present authors were unaware of \cite{lacoin2019probabilistic}, whereas the authors of \cite{lacoin2019probabilistic} were unaware of
\cite{alos2020exponentiation}.\footnote{Preprint of \cite{alos2020exponentiation} posted on SSRN in 2017. We much share with the authors of \cite{lacoin2019probabilistic} our surprise that such recursions had not been discovered decades earlier.}) %
We note that the $\mF$-expansion was left as formal expansion in \cite{alos2020exponentiation}, whereas validity of the $\mK$ recursion of \cite{lacoin2019probabilistic} was only shown under a stringent integrability condition which rules out virtually all examples discussed later on. The main theorem of this paper is a $\mathbb{G}$ {\it (generalized forest) expansion}, which contains both $\mathbb{F}$- and $\mathbb{K}$-expansion as special cases, together with optimal integrability conditions for convergence. Our arguments are also well adapted to further localization, as seen in points (i) in Theorems \ref{thm:mainintroG}  and \ref{thm:mainintroK} below.

\begin{definition} \label{def:diamond}
Given two continuous semimartingales $A,B$ with integrable covariation process $\langle A , B \rangle$, the diamond product\footnote{Warning. Our diamond product is (very) different from the Wick product, e.g. Ch.III of \cite{janson1997gaussian}.} of $A$ and $B$ is another continuous semimartingale given by, writing  $\angl{A,B}_{t,T}$ for the difference of  $\angl{A,B}_T$ and $\angl{A,B}_t\, $,
$$
     (A \dm B)_t (T) := \eef{\langle A , B \rangle_{t,T}} = \eef{\langle A , B \rangle_{T}} -\langle A , B \rangle_t \; .
$$
\end{definition} 
 
Here and below we say that $A_T$ has exponential moments, if $\mathbb{E} e^{x A_T} < \infty$ for $x \in \mathbb{R}$ in a neighbourhood of zero. This of course implies that $A_T$ has moments of all orders: $A_T \in L^N$, for any $N \in \mathbb{N}$.
\begin{theorem}[$\mG$ expansion, a.k.a. broken exponential martingale] \label{thm:mainintroG} 

Let $A_T$ be a real-valued, $\cF_T$-measurable random variable.\\
(i) Assume $A_T$ has moments of all orders. With $z=(z_1, z_2) \in (i \,\mathbb{R})^2$, the following asymptotic expansion for the joint c.f. of $Y=\mathbb{E}_\bullet A_T$ and its quadratic variation holds, 
\begin{equation}  \label{eq:AsymCFnew}
\log \eef{e^{z_1 Y_T + z_2 \langle Y \rangle_T}} \sim 
z_1 Y_t + z_2 \langle Y \rangle_t 
+  \sum_{k\ge2}   \mG_t^k (T;z)   \qquad \text{ as $z \to 0$} \;,
  \end{equation}

\begin{equation}  \label{eq:Grec}
\mG^2  = \left( \frac{1}{2} z_1^2+ z_2\right)
(Y \dm Y)_t(T)  
\ \ \ \text{ and } \ \ \ \forall k > 2:  
\ \ \mG^k = \frac{1}{2} \sum_{j = 2}^{k -2} \mG^{k - j} \dm \mG^j +  (z_1 \,Y  \dm \mG^{k - 1}) \,.
\end{equation} 
(ii) If $A_T$ has exponential moments, (\ref{eq:AsymCFnew}) can be strengthened to equality, with a.s. absolutely convergent sum $\Lambda := \Lambda^T_t := \Lambda^T_t (z) := \sum_{k\ge2}  \mG_t^k (T;z)$ on the right-hand side, for $z \in \mathbb{C}^2$ with $|z| < \rho_t (\omega)$, a.s. strictly positive.

\noindent (iii)  For the multivariate case, with  $Y^i=\mathbb{E}_\bullet A^i_T, \ i=1,\dots,d$, it suffices to replace
$$
z_1 Y_T \rightsquigarrow  z_{1;i} Y^i_T, \quad  z_2 \langle Y \rangle_T \rightsquigarrow  z_{2;j,k} \langle Y^j,Y^k \rangle_T, \quad %
\mG^2   \rightsquigarrow \left( \frac{1}{2} z_{1;i} z_{1;j}+ z_{2;i,j}\right)
(Y^i \dm Y^j)_t(T)  
\, ,
$$
with summation over repeated indices. 
\end{theorem}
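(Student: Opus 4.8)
The plan is to identify the ``broken'' exponent $\Lambda:=\Lambda^T_t(z):=\log M_t-z_1Y_t-z_2\angl{Y}_t$, where $M_t:=\eef{e^{z_1Y_T+z_2\angl{Y}_T}}$ (well-defined, and a continuous semimartingale, for $z$ near $0$ since $M_t(0)=1$), with the forest sum $\sum_{k\ge2}\mG^k$; equivalently, to show that $\sum_{k\ge2}\mG^k$ converges to a continuous semimartingale $\Lambda$ with $\Lambda_T=0$ for which $N_t:=\exp(z_1Y_t+z_2\angl{Y}_t+\Lambda_t)$ is a (true) martingale, whence $\eef{e^{z_1Y_T+z_2\angl{Y}_T}}=\eef{N_T}=N_t$ is \eqref{eq:AsymCFnew}. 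The bridge between the two formulations is a drift-cancellation identity, which I would establish first.

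Writing $\Lambda=\Lambda^c+D$ for the continuous-martingale and finite-variation parts and applying It\^o's formula to $N=\exp(z_1Y+z_2\angl{Y}+\Lambda)$ --- using that covariations see only martingale parts --- one finds $N$ is a local martingale iff $dD_s=-\bigl(\tfrac12z_1^2+z_2\bigr)d\angl{Y}_s-z_1\,d\angl{Y,\Lambda}_s-\tfrac12\,d\angl{\Lambda}_s$. Integrating from $t$ to $T$, inserting $\Lambda_T=0$, and applying $\eef{\cdot}$ (so $\eef{\Lambda^c_T-\Lambda^c_t}=0$) gives the diamond fixed-point equation
\[
\Lambda_t=\Bigl(\tfrac12z_1^2+z_2\Bigr)(Y\dm Y)_t(T)+z_1\,(Y\dm\Lambda)_t(T)+\tfrac12\,(\Lambda\dm\Lambda)_t(T),
\]
and conversely any adapted continuous semimartingale solving this with $\Lambda_T=0$ renders $N$ a local martingale. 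I would then solve it by a graded ansatz: assigning weight $1$ to $z_1$ and weight $2$ to $z_2$ makes $\tfrac12z_1^2+z_2$ weight-homogeneous of weight $2$, so substituting $\Lambda=\sum_{k\ge2}\mG^k$ with $\mG^k$ of weight $k$ and equating weights forces $\mG^2=(\tfrac12z_1^2+z_2)(Y\dm Y)_t(T)$ and, for $k>2$, precisely \eqref{eq:Grec} (the linear term feeds $z_1(Y\dm\mG^{k-1})$, the quadratic term the sum $\tfrac12\sum_{j=2}^{k-2}\mG^{k-j}\dm\mG^j$, the range being forced by $\mG^m$ starting at $m=2$). Iteration exhibits $\mG^k$ as a sum over binary forests with $k$ leaves decorated by $Y$; each $\mG^k$ is a well-defined adapted continuous semimartingale once $A_T$ has enough polynomial moments --- a number growing with $k$, since every diamond product costs a little integrability --- which is the hypothesis behind (i).

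For part (i), with $A_T\in L^N$ for all $N$, the map $z\mapsto M_t(z)$ (always defined for $z\in(i\R)^2$) is $C^\infty$ near $0$ by differentiating under the expectation, so $\Lambda_t$ has a formal Taylor series at $0$; by uniqueness of the graded solution of the fixed-point equation that series is $\sum_k\mG^k$, and Taylor's theorem plus moment bounds on the truncation error upgrade this to the Poincar\'e-type asymptotics \eqref{eq:AsymCFnew}, the remainder past weight $K$ being of order $\|z\|^{K+1}$ in the weighted norm $\|z\|:=|z_1|+|z_2|^{1/2}$. For part (ii), I would first check that exponential moments of $A_T$ give $\E\,e^{x_1Y_T+x_2\angl{Y}_T}<\infty$ for real $x$ near $0$ --- the $\angl{Y}_T$-direction via the supermartingale property of the stochastic exponentials $\cEE{\lambda Y}_T=\exp(\lambda Y_T-\tfrac12\lambda^2\angl{Y}_T)$, the exponential integrability of $Y_T$, and Cauchy--Schwarz --- so that $M_t(z)$ is finite and, by dominated convergence, analytic on a complex polydisc, with $M_t(0)=1$ and $N=M$ a genuine martingale. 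Hence $\Lambda_t(z)$ is analytic on a random polydisc $|z|<\rho_t(\omega)$, $\rho_t>0$ a.s., solves the fixed-point equation there, and --- matching Taylor coefficients as before --- equals $\sum_{k\ge2}\mG^k_t$ with absolute convergence on $|z|<\rho_t$; running the argument with $T$ replaced by a stopping time gives the localized versions in (i)--(ii).

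The part I expect to be hardest is the \emph{forest-native} derivation of this convergence, which is what makes the integrability conditions sharp: instead of invoking analyticity, one bounds $\|\mG^k\|_p$ recursively --- via $\|\mG^{k-j}\dm\mG^j\|_p\le\|\angl{\mG^{k-j},\mG^j}_{t,T}\|_p$ and BDG/Cauchy--Schwarz control of the covariations of the $\mG$'s already built --- against the $\le C^k$ count of binary forests with $k$ leaves, closing a generating-function-type estimate for $\sum_k\|\mG^k\|_p\,|z|^{\cdot}$; one must then verify that the exponential class is the unique integrability class stable under infinitely many such moment-degrading steps, so that neither (i) nor (ii) can be weakened. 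Finally, (iii) is pure bookkeeping: replace $Y$ by $(Y^i)_{i=1}^d$, $z_1Y_T$ by $z_{1;i}Y^i_T$ and $z_2\angl{Y}_T$ by $z_{2;j,k}\angl{Y^j,Y^k}_T$ with summation over repeated indices, so that $\mG^2=(\tfrac12z_{1;i}z_{1;j}+z_{2;i,j})(Y^i\dm Y^j)_t(T)$; the drift identity, the fixed-point equation, the recursion, and all the estimates carry over verbatim with index sums inserted, the combinatorial constants acquiring only harmless powers of $d$.
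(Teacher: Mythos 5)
Your treatment of part (ii) is essentially the paper's own proof: you define $\Lambda$ through the broken exponential identity, use the fact that $z_1Y+z_2\angl{Y}+\Lambda^T$ is a stochastic logarithm (your It\^o drift-cancellation) to obtain the same diamond fixed-point equation $\Lambda_t=(\tfrac12 z_1^2+z_2)(Y\dm Y)_t(T)+z_1(Y\dm\Lambda)_t(T)+\tfrac12(\Lambda\dm\Lambda)_t(T)$ as the paper's key equation (2.1), and your weighting ($z_1$ of weight $1$, $z_2$ of weight $2$) is exactly the paper's parametrization $Z(\eps)=\eps z_1Y+\eps^2 z_2\angl{Y}$, with analyticity of the conditional mgf under exponential moments (obtained, as you do, by the Novikov-type supermartingale/Cauchy--Schwarz argument) justifying the matching of coefficients. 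Part (iii) is likewise handled the same way.

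The gap is in part (i). You identify the Taylor coefficients of $\log\eef{e^{z_1Y_T+z_2\angl{Y}_T}}$ with the $\mG^k$ ``by uniqueness of the graded solution of the fixed-point equation'', but under only polynomial moments that fixed-point equation has not been established: for purely imaginary $z$ the martingale $M_t(z)$ is complex-valued and may come arbitrarily close to $0$ on sets of positive probability, so $\Lambda=\log M-z_1Y-z_2\angl{Y}$ need not be a semimartingale with integrable brackets (its quadratic variation involves $|M|^{-2}\,d\angl{M}$), the diamond products $Y\dm\Lambda$ and $\Lambda\dm\Lambda$ are not known to be defined, and no analyticity in $z$ is available to transfer the identity from the exponential-moment regime. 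Thus ``Taylor's theorem plus moment bounds'' proves only that an asymptotic expansion with \emph{some} coefficients exists; the actual content of (i) --- that these coefficients are the $\mG^k$ produced by (\ref{eq:Grec}) --- is missing. The paper closes exactly this gap by localization: truncate $A_T$ at level $\ell$ (so part (ii) applies to the bounded $A_T^\ell$), note that $A_T\in L^{\infty-}$ forces $\angl{Y}_T\in L^{\infty-}$ and that Lemma \ref{lem:recwelldef} gives $L^{n/j}$ bounds uniform in $\ell$, then pass to the limit $\ell\to\infty$ both in the finitely many conditional cumulants and in the diamond recursion. You would need to import such a truncation argument (or the Hermite-polynomial alternative sketched in the paper) to make (i) rigorous; also, your closing remarks about showing the exponential class is the ``unique'' admissible integrability class address a sharpness claim that the theorem does not make and need not be proved.
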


With $z=(z_1,-z_1/2)$, the $\mG$-recursion becomes precisely the $\mF$-recursion, equ. (3.1) in \cite{alos2020exponentiation}, whereas the case $z=(z_1,0)$ yields the $\mK$ (cumulant) recursion, equ. (3.4),(3.9) in \cite{lacoin2019probabilistic} as seen in (\ref{eq:Krec}) below. We should note that the change-of-measure based derivation of Lacoin et al. was given under stringent integrability assumption (a $L^\infty$ bound on $\langle Y \rangle_T$, hence Gaussian concentration of $Y_T = A_T$), though the authors (correctly) suspect validity of the cumulant recursion in greater generality. Below we achieve this under optimal conditions: conveniently, our proof of part (ii) below (found as such in the first arXiv version of this paper; now seen as special case of the proof of (ii) above), different from \cite{lacoin2019probabilistic}, applies directly under finite exponential moments, a necessary condition for the cumulant generating function to exist. By a careful localisation argument (alternatively: Hermite polynomials approach), we can further show, part (i) below, that the cumulant recursion is valid, as a {\it finite recursion}, under a matching (optimal) {\it finite moment} assumption. (Existence of the first $N$ moments is equivalent to existence of the first $N$ cumulants.) In part (iii), we give the multivariate formulation.

\begin{theorem} \label{thm:mainintroK}  
(i) Let $A_T$ be $\cF_T$-measurable with $N \in \mathbb{N}$ finite moments. Then the recursion
\begin{equation}  \label{eq:Krec}
\mK^{1}_t (T) :=  \eef{A_T} \ \ \ \text{ and } \ \ \ \forall n > 0:   
\ \ \mK^{n+1}_t (T)  = \frac{1}{2} \sum_{k=1}^n   ( \mK^{k} \dm \mK^{n+1-k} )_{t} (T)   \
\end{equation} 
is well-defined up to $\mK^{N}$ and, for $z \in i \, \R$,
$$
  \log \eef{e^{z A_T}}    =  \sum_{n = 1}^N z^n  \mK^{n}_t(T) + o (|z|^N) \quad \text{ as $z \to 0$} \; ,
$$
which identifies $n! \times \mK_t^{n}(T) $  as the (time $t$-conditional) $n$.th cumulant of $A_T$.
If $A_T$ has moments of all orders, we have the asymptotic expansion,
\begin{equation}  \label{eq:AsymCF}
\log \eef{e^{z A_T}} \sim \sum_{n = 1}^{\infty} z^{n} \mK^n_t  \ \ \ \text{ as $z \rightarrow 0$} \;.
  \end{equation}
\noindent (ii) If $A_T$ has exponential moments, so that its (time $t$-conditional) mgf 
$ \eef{  e^{x A_T}}$ is a.s. finite for $x\in \R $ in some neighbourhood of zero, then there exist a maximal convergence radius $\rho =\rho_t (\omega) \in (0,\infty]$ a.s. such that for all  $z \in \mathbb{C}$ with $|z| < \rho$,
 \begin{equation}  \label{eq:ConvCGF}
  \log \eef{e^{z A_T}} = \sum_{n = 1}^\infty z^{n} \mK^n_t \;. 
  \end{equation} 
\noindent (iii)  In the multivariate case, with  $Y^i=\mathbb{E}_\bullet A^i_T, \ i=1,\dots,d$ replace $\mK^n \rightsquigarrow \mK^{(n)}$, $n$-tensors (over $\mathbb{R}^d$), with tensorial interpretation of the diamond product in (\ref{eq:Krec}), and substitute in the expansion 
$$ z^{n} \mK^n_t \rightsquigarrow \langle z^{\otimes n }, \mK^{(n)} \rangle =  z_{i_1} \cdots  z_{i_n} \mK^{(n);i_1,\dots,i_n} \; . $$
The multivariate ($t$-conditional) cumulants of $A_T$ are then precisely given by $n! \mK_t^{(n);i_1,\dots,i_n} $.
   \end{theorem}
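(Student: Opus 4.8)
The plan is to obtain Theorem \ref{thm:mainintroK} as a projection of Theorem \ref{thm:mainintroG}. First I would specialize the $\mG$-recursion \eqref{eq:Grec} to $z=(z_1,0)$, i.e.\ kill the quadratic-variation variable. Then $\mG^2 = \tfrac12 z_1^2 (Y\dm Y)_t(T)$ and the cross term $z_1 Y \dm \mG^{k-1}$ survives, so the recursion reads $\mG^k = \tfrac12\sum_{j=2}^{k-2}\mG^{k-j}\dm\mG^j + z_1 Y\dm\mG^{k-1}$. Each $\mG^k$ is then a polynomial in $z_1$ of degree $k$, homogeneous once we note $Y$ carries ``weight one''; writing $\mG^k_t(T;(z_1,0)) = \sum_{m} z_1^m (\cdots)$ one sees that only the top-degree term $z_1^k \mK^k_t(T)$ contributes, because the diamond product is bilinear and $Y$ adds exactly one power of $z_1$ per factor. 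Collecting the $z_1^{n+1}$-coefficient and absorbing $\mK^1 := z_1^{-1}\cdot(z_1 Y_t) = Y_t = \eef{A_T}$ into the sum turns \eqref{eq:Grec} into precisely \eqref{eq:Krec}: the term $z_1 Y\dm\mG^{k-1}$ becomes the $k=1$ (and $k=n$) boundary terms $\mK^1\dm\mK^{n}$ in the symmetric sum $\tfrac12\sum_{k=1}^n \mK^k\dm\mK^{n+1-k}$. I would make this reindexing explicit in one displayed computation.

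Next, for the analytic statements: part (ii) and the asymptotic expansion \eqref{eq:AsymCF} of part (i) are immediate from Theorem \ref{thm:mainintroG}(ii) and \eqref{eq:AsymCFnew} respectively, upon setting $z_2=0$ and identifying $\mK^n_t$ with the $z_1^n$-coefficient of $\Lambda$ (resp.\ of $\sum_{k\ge2}\mG^k_t$); the convergence radius $\rho_t(\omega)$ is inherited from $\rho_t$ in Theorem \ref{thm:mainintroG}(ii) restricted to the line $z_2=0$. The identification of $n!\,\mK^n_t(T)$ with the $n$-th conditional cumulant is then just the definition of cumulants as Taylor coefficients of $\log\eef{e^{zA_T}}$.

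The genuinely new content is the \emph{finite-moment} sharpening in part (i): that under only $A_T\in L^N$ the recursion \eqref{eq:Krec} is well-defined up to $\mK^N$ and the expansion holds to order $o(|z|^N)$. Here I cannot simply quote Theorem \ref{thm:mainintroG}, which assumes moments of all orders. The main obstacle is that the diamond product $\mK^k\dm\mK^{n+1-k}$ requires the covariation $\langle\mK^k,\mK^{n+1-k}\rangle$ to be integrable, and one must show inductively that $\mK^k$, built from $k$ ``copies'' of $Y$, lies in $L^{N/k}$ so that the covariations needed to reach $\mK^N$ are all integrable --- a Hölder-type bookkeeping, exactly the scaling $k\mapsto N/k$ suggested by homogeneity. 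The two routes flagged in the text are: (a) a localization argument --- stop $Y$ at $\tau_m=\inf\{t: |Y_t|\ge m\}\wedge\inf\{t:\langle Y\rangle_t\ge m\}$, apply the all-moments result to the bounded $A_T^{(m)}:=Y_{\tau_m}$ (whose conditional cumulant functions satisfy \eqref{eq:Krec}), and pass $m\to\infty$ controlling the error in the $o(|z|^N)$ expansion by the first $N$ moments of $A_T$ via dominated/uniform-integrability arguments; or (b) the Hermite-polynomial approach, expanding $e^{zA_T}$ against Hermite polynomials of $Y$ and truncating, so that only the first $N$ moments enter. I would carry out route (a): set up the stopping times, verify that stopping commutes with the diamond recursion up to order $N$, establish the $L^{N/k}$ estimates on the stopped $\mK^{k}$, and then show $\eef{e^{zA^{(m)}_T}}\to\eef{e^{zA_T}}$ with a remainder estimate uniform enough to preserve the Taylor expansion to order $N$. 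Finally part (iii) follows by running the same projection on the multivariate $\mG$-recursion of Theorem \ref{thm:mainintroG}(iii), with the diamond product and all coefficients read tensorially; no new analysis is needed beyond the index bookkeeping.
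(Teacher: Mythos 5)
Your proposal follows the paper's own route: part (ii) and the reduction of \eqref{eq:Grec} to \eqref{eq:Krec} come from Theorem \ref{thm:mainintroG} at $z_2=0$, well-definedness of the recursion under $N$ moments is exactly the $L^{N/k}$ H\"older/BDG bookkeeping of the paper's Lemma \ref{lem:recwelldef}, and the finite-moment expansion in part (i) is obtained, as in the paper, by localizing to a bounded terminal variable (so that the exponential-moment result applies), passing to the limit in the diamond recursion via uniform integrability, and noting the Hermite alternative. The only cosmetic difference is your localization device --- stopping $Y$ at hitting times rather than the paper's two-sided truncation $A_T^\ell=(-\ell)\vee A_T\wedge\ell$ --- which is an interchangeable choice requiring the same limit justification.
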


\subsection{Tree, forests and reordering} \label{sec:trees}

\tikzexternaldisable 

Both quadratic recursions (\ref{eq:Grec}) and (\ref{eq:Krec}) lead to binary trees, with the (commutative, non-associated) diamond product represented by {\it root joining}, 
$$
                \tau_1 \diamond \tau_2 = \tau_2 \diamond \tau_1 = \includegraphics[scale=0.04]{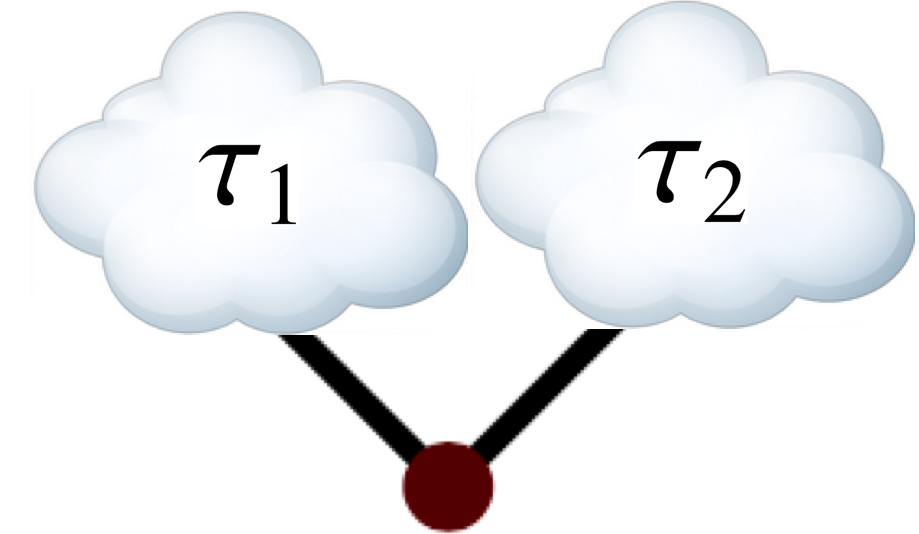} \quad .
$$
where we agree to set $Y \equiv \A \,$, interpreted as single leaf. In the $2$-variate case we can write $(Y^1, Y^2) = (\, \A \,, \X \,)$, for the $d$-variate case use labels or colors. For better readability, write 
$$
 (z_1,z_2) \rightsquigarrow (a,b), \qquad z_1 Y_T + z_2 \langle Y \rangle_T  \rightsquigarrow a Y_T + b \langle Y \rangle_T \;,
$$
in (\ref{eq:Grec}).The tree formalism is extremely convenient when it comes to {doing} explicit computations and also explains an interesting relation between {the} $\mG$-recursion and the $\mK$-recursion.  {Specifically, we see that the $\mG$-recursion is equalivent to the  $2$-variate $\mK$-recursion applied to $A_T = (Y_T, \langle Y \rangle_T)$ {\it after forest reordering}. This procedure has moreover the important effect of resolving infinite cancellations present in the $2$-variate $\mK$-expansion, as may be {seen} by applying {it} to the exponential martingale case ($b = - a^2/2$).
The first few $\mG$-forests are then spelled out as folllows: 
\begin{eqnarray}
\mG^2  &=&  (\tfrac{1} 2 a^2+b)\, \AAd \nonumber\\
\mG^3 &=& a\, (\tfrac 12 a^2+b)\, \AAdAd \nonumber\\
\mG^4 &=& \tfrac1{2} (\tfrac 12 a^2+b)^2\,\AAdAAdd+ 
a^2\, (\tfrac 12 a^2+b)\,  \AAdAdAd
\nonumber\\
\mG^5 
&=& a\, (\tfrac 12 a^2+b)^2\,\,\AAdAddAAd
+  \tfrac1{2} a\,(\tfrac 12 a^2+b)^2 \,\AAdAAddAd+a^3\, (\tfrac 12 a^2+b)\,\,\AAdAdAdAd    \label{eqGforests}
\end{eqnarray}  
Note that these $\mG$-forests consist of trees which are homogenous in the number of leaves ($\leftrightarrow Y$) but not in $a,b$ (unless powers of $b$ are counted twice). 
Upon setting $b = 0 $ we get $\mG^n (T, (a,0) ) = \mK^n (T, a) = a^n \mK^n(T)$ and get the first few $\mF$-forests: $\mK^{1}  = \Y  \equiv   \A$ and
\begin{eqnarray}
 \mK^{2}  &=&  \tfrac{1}{2} \Y \dm \Y  \equiv   \tfrac{1}{2} \AAd \nonumber \\
\mK^{3}  &=&   \tfrac{1}{2} (\Y \dm \Y) \dm \Y  \equiv   \tfrac{1}{2} \AAdAd  \label{ex:cum1to4} \\
 \mK^{4} &=&    \tfrac{1}{8}  (\Y \dm \Y)^{\dm 2}  + \tfrac{1}{2} ((\Y \dm \Y) \dm \Y) \dm \Y   \equiv   \tfrac{1}{8} \AAdAAdd  + \tfrac{1}{2} \AAdAdAd \  \nonumber \\
 \mK^5  &=& \qquad \dots \qquad = \tfrac{1}{4}\,\,\AAdAddAAd
+  \tfrac{1}{8} \,\AAdAAddAd+\tfrac{1}{2} \,\,\AAdAdAdAd 
    \end{eqnarray}
\begin{remark} \label{rmk:sym}The number of different tree shapes seen in both $\mG^n$, $\mK^n$ above is precisely the number of interpretations of an $(n-1)$-fold (commutative but not associative) diamond product, i.e. number of ways to insert parentheses. Starting from the empty tree, the resulting integers $\{ 0, 1, 1, 1, 2, 3, 6, ... \}$ are known as Wedderburn-Etherington numbers (OEIS A001190). The pre-factors in $2^{n} \mK^{n+1}$ further display the symmetry factors, e.g. $ 2^4 \, \mK^5 = 4 \,\AAdAddAAd + 2 \,\AAdAAddAd+ 8 \AAdAdAdAd\, $ with $4+2+8=14 = C_4$ where we recall that the $n.th$ Catalan number (A000108), standard example in analytic combinatorics, gives the number of binary trees with $n+1$ leaves. We note the combinatorial consistency of (\ref{eq:Krec}) with Segner's recursion $C_{n+1} = \sum_{k=0}^n C_k C_{n-k}$ if rewritten for $2^{n} \mK^{n+1}$.
\end{remark} 
 
\noindent The $2$-variate $\mF$-forests can be represented by all possible consistent ways of marking leaves with $\times$. This leads to  e.g. $2\times2$-matrix valued $\mK^{(2)}$ and $(\mathbb{R}^2)^{\otimes 4} \cong \mathbb{R}^{2^4}$ valued $\mK^{(4)}$, with representative trees of the form
$$   \AXd \qquad 
    \AXdAdAd  \;.  
$$
Tensor-contracting these 2-variate $\mK$-trees with powers of $a$ (number of $\A$ leaves) and $b$ (number of $\X$ leaves), and a $\X \rightsquigarrow \AAd$ substitution},\footnote{Strictly speaking, the $2$-variate case $A_T = (Y_T,  \angl{Y}_T)$ gives $\mathbb{E}_t A_T = (Y_t, \mathbb{E}_t \langle Y \rangle_T)$ with $\mathbb{E}_t \langle Y \rangle_T= (Y \diamond Y)_t (T) +\langle Y \rangle_t =  \AAd + BV$, where the bounded variation (BV) in $t$ component  $\langle Y \rangle_t$ shows up as a multiple of $z_2$ in (\ref{eq:AsymCFnew}), but does not contribute to subsequent diamond products.}
 
\bea
\mK^1  &=&  a\, \A+ b\, \AAd\nonumber\\
\mK^2 &=& \tfrac12\, \left(a\, \A +b  \, \AAd\right)^{\dm 2}= \tfrac 12 a^2\, \AAd + a b\, \AAdAd  +\tfrac 12 b^2\,\,\AAdAAdd \nonumber\\
\mK^3 
&=&  \tfrac 12 a^3\, \AAdAd + \tfrac12 a^2 b\,\AAdAAdd + a^2 b\, \AAdAdAd
+ a b^2\, \AAdAddAAd +    \tfrac12 a b^2\,\AAdAAddAd
+...
\nonumber\\
\mK^4 
&=&  \tfrac 12 a^4\, \AAdAdAd + \tfrac 1{2^3} a^4\,\AAdAAdd 
+ \tfrac 1{2} a^3 b\,\AAdAddAAd + \tfrac 1{2} a^3 b\, \AAdAAddAd
+ a^3 b\,\AAdAdAdAd +  \tfrac 1{2} a^3 b\,\AAdAddAAd +...\nonumber\\
\mK^5 
&=& \tfrac1{2} a^5\,\AAdAdAdAd + \tfrac 1{2^3} a^5\, \AAdAAddAd +\tfrac 1{2^2} a^5\, \AAdAddAAd + ...
\label{eq:KtreesNova}
\eea

\medskip

\begin{cor}[Forest reordering] \label{cor:RE}
The $\mG$-expansion (\ref{eqGforests}) is a reordering of the $2$-variate $\mK$-expansion applied to $A_T = (Y_T, \langle Y \rangle_T)$, as displayed in  (\ref{eq:KtreesNova}), based on the number of leaves. 
\end{cor}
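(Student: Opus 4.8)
The plan is to prove the corollary by comparing the two recursions term by term, organized by leaf-count, and showing that the $\mG^k$ term is exactly the sum of all those $2$-variate $\mK^n$-contributions (after the $a,b$-contraction and the $\X\rightsquigarrow\AAd$ substitution) whose trees have $k$ leaves, once $\X$-leaves are counted as contributing two $\A$-leaves each. Concretely, for a $2$-variate $\mK$-tree $\tau$ built from the recursion (\ref{eq:Krec}) applied to $A_T=(Y_T,\langle Y\rangle_T)$, assign a weight $w(\tau)$ equal to the number of $\A$-leaves plus twice the number of $\X$-leaves; equivalently, after the substitution $\X\rightsquigarrow\AAd$ (which replaces each crossed leaf by a cherry $\AAd$ with two $\A$-leaves), $w(\tau)$ is simply the number of $\A$-leaves of the substituted tree. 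The claim to establish is
\begin{equation*}
\mG^k_t(T;(a,b)) \;=\; \sum_{n\ge 1}\ \sum_{\substack{\tau\in\mathrm{supp}(\mK^n)\\ w(\tau)=k}} c_\tau\, a^{\#\A(\tau)} b^{\#\X(\tau)}\, \bigl[\tau\text{ with }\X\rightsquigarrow\AAd\bigr]_t(T),
\end{equation*}
where $c_\tau$ is the coefficient of $\tau$ in the $2$-variate $\mK$-expansion.

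The key steps, in order: First, record that under $\X\rightsquigarrow\AAd$ the diamond product is unchanged and the weight $w$ is additive under root-joining, $w(\tau_1\dm\tau_2)=w(\tau_1)+w(\tau_2)$, while the single leaves have $w(\A)=1$, $w(\X)=2$. Second, starting from $\mK^1 = a\,\A + b\,\AAd$, observe that after the substitution this reads $a\,\A + b\,\AAd$ with the first summand of weight $1$ and the second of weight $2$; grouping the $2$-variate $\mK$-expansion by total weight $k = \sum_n (\text{weight-}k\text{ part of }\mK^n)$ is well-defined because every tree has a finite leaf count. Third, and this is the substance of the argument, show that the weight-graded pieces satisfy exactly the $\mG$-recursion (\ref{eq:Grec}). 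This is where one uses that the $2$-variate $\mK$-recursion $\mK^{n+1}=\tfrac12\sum_{k=1}^{n}\mK^{k}\dm\mK^{n+1-k}$, when one instead sums over all trees of a fixed weight $m$ regardless of which generation $\mK^n$ they sit in, telescopes: the set of weight-$m$ trees decomposes according to the root split into an (unordered) pair of subtrees of weights $j$ and $m-j$, giving $\tfrac12\sum_{j}\mathcal W^{m-j}\dm\mathcal W^{j}$ for $m>2$, where $\mathcal W^m$ denotes the weight-$m$ graded piece; the boundary/low-weight terms produce the $\mG^2 = (\tfrac12 a^2+b)\AAd$ seed (weight $2$: from $\tfrac12(a\A)\dm(a\A)$ and from the $b\,\AAd$ summand of $\mK^1$, noting $\AAd$ has weight $2$) and the extra linear term $a\,Y\dm\mG^{k-1}$ (weight-$m$ trees one of whose root-subtrees is the single $\A$-leaf of weight $1$, with the combinatorial factor from the unordered pair collapsing the $\tfrac12$ against the two ways of choosing which side is the leaf). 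Fourth, invoke the well-definedness and convergence statements already proved in Theorems \ref{thm:mainintroG} and \ref{thm:mainintroK} to justify the rearrangement of the (asymptotic, resp. absolutely convergent) series; since the grouping is by leaf number and each $\mG^k$ is a finite sum of trees, no analytic subtlety beyond what is already established arises.

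The main obstacle I expect is bookkeeping the combinatorial coefficients through the reindexing: one must check that summing the symmetry factors $c_\tau$ of all weight-$k$ $2$-variate $\mK$-trees reproduces precisely the coefficients appearing in $\mG^k$ in (\ref{eqGforests}) — in particular that the factor-of-$\tfrac12$ conventions (the $\tfrac12$ in front of the diamond sum, versus the unordered-pair overcounting) are consistent between the two recursions, and that the asymmetric term $(z_1 Y\dm\mG^{k-1})$ with its coefficient $a$ (and \emph{no} $\tfrac12$) is exactly what the weight-grading of the symmetric $2$-variate recursion throws off when one subtree degenerates to a bare leaf. A clean way to handle this uniformly is to work with generating functions: set $\Phi(a,b;u) := u\,Y + \sum_{k\ge2}\mathcal W^k\, u^? $ — more precisely track weight by a formal variable — and verify that both recursions are encoded by the same fixed-point equation $\Phi = aY + \tfrac12\,\Phi\dm\Phi + (\text{seed from }b)$, after which the corollary is the statement that the two expansions are the weight-homogeneous components of the one solution. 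I would present the generating-function identity first and then extract (\ref{eqGforests}) versus (\ref{eq:KtreesNova}) as its graded pieces, relegating the explicit low-order checks ($\mG^2,\dots,\mG^5$ against the displayed $\mK^1,\dots,\mK^5$) to a verification that the bookkeeping is correct.
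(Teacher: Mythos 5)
Your proposal is correct, but it is a genuinely different argument from the one in the paper. The paper's proof is a two-line soft argument: by Theorem \ref{thm:mainintroG} the $\mG$-expansion of $\log \E_t \exp(aY_T + b\langle Y\rangle_T)$ converges absolutely with terms homogeneous in $Y$ (equivalently, in leaf count), while the $2$-variate $\mK$-expansion of the same quantity has terms homogeneous in $(a,b)$; since both represent the same function, regrouping the latter by leaf number must reproduce the former. You instead prove the identity at the level of the recursions: grading trees by leaf count after the substitution of the $\langle Y\rangle$-leaf by a two-leaf cherry, and showing the graded pieces of the symmetric $\mK$-recursion satisfy the $\mG$-recursion (\ref{eq:Grec}). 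Your generating-function formulation is the cleanest way to see this: both sides satisfy the same fixed-point equation $\Phi = aY + b\,(Y\dm Y) + \tfrac12\,\Phi\dm\Phi$ (BV parts being annihilated by $\dm$), the weight-$1$ piece is $aY$, the weight-$2$ piece is $(\tfrac12 a^2+b)(Y\dm Y)$, and for weight $m>2$ the two boundary terms $j=1$, $j=m-1$ of $\tfrac12\sum_{j=1}^{m-1}\Phi^{(j)}\dm\Phi^{(m-j)}$ collapse into the unhalved term $a\,Y\dm\Phi^{(m-1)}$, exactly matching (\ref{eq:Grec}); uniqueness of the graded solution then gives the corollary. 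This is precisely the ``alternative proof based on the combinatorics of forest reordering'' that the paper's subsequent remark alludes to but does not carry out. What each approach buys: the paper's argument is shorter and leans entirely on the already-established convergence and homogeneity, whereas yours is self-contained at the algebraic level, identifies the trees and coefficients directly (so it holds formally, with convergence needed only to license the rearrangement of the infinite sum, which you correctly delegate to Theorems \ref{thm:mainintroG} and \ref{thm:mainintroK}), and it makes transparent where the asymmetric term $z_1 Y\dm\mG^{k-1}$ and the $\tfrac12$-conventions come from.
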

\begin{proof}
From Theorem \ref{thm:mainintroG} we know that $\log \E_t \exp (a Y_T + b \langle Y \rangle_T)$ admits an (absolutely convergent) $\mG$ expansion, with terms homogenous in $Y$ ($\leftrightarrow$ number of leaves), and similarly a $2$-variate $\mK$-expansion with terms homogenous in $a,b$. The statement follows. 
\end{proof}

\begin{remark}[Generalized forests vs. cumulants] \label{rmk:EMart}
The $\mG$ and $\mK$ expansions coincide in absence of the term $b  \langle Y \rangle_T$: just set $b=0$ in (\ref{eqGforests}), (\ref{eq:KtreesNova}). In general, $b \ne 0$ matters in application (e.g. Section \ref{sec:SV}), the expansions are then different and the question arises about their qualitative difference. 
The $\mG$ expansion has the advantage of preserving some structural properties, lost in the $2$-variate $\mK$-expansion. To wit, consider the exponential martingale case  in which case
$$
 b =  - a^2/2 \quad \implies \quad 
\E_t \exp (a Y_T + b \langle Y \rangle_T) = \exp (a Y_t + b \langle Y \rangle_t) ,
$$
so that both $\mG$-sum, i.e.  $\sum_{k\ge2}   \mG_t^k$, and $\mK$-sum must vanish. However, while the zero $\mG$-sum only consists of zero summands $\mG^k$,
as seen in (\ref{eqGforests}) upon taking $b = - a^2/2$, this is not so for the $\mK$-sum, and only an infinite cascade of cancelations among the $\mF^k$'s causes the sum to vanish. 
\end{remark} 

\begin{remark} Corollary \ref{cor:RE} suggests an alternative proof of the $\mG$-expansion from the $\mK$-expansion, based on the combinatorics of forest reordering.
\end{remark}

\section{Proofs} 

\subsection{Proof of Theorem \ref{thm:mainintroG}}
 
Fix $z=(z_1,z_2)$ and define a family of semimartingales $Z=Z(\eps)$ given by
$$Z (\eps) := \eps z_1 Y + \eps^2 z_2 \langle Y \rangle.$$
The martingale $Y= \mathbb{E}_\bullet{A_T}$ inherits exponential integrability from $A_T$, as does $\langle Y \rangle$,
by a standard argument (e.g. along the proof of Novikov's criterion 
\cite[Ch.VIII.1.]{revuz2013continuous}.) In particular, for all $\eps$ in some (deterministic) neighbour around zero, the left-hand side of
$$
 \mathbb{E}_t e^{Z_{T} (\eps)} = e^{Z_t (\eps)+ \Lambda_t^T (\eps)}
$$
is well-defined, and so is $\Lambda_t^T (\eps)$, defined through this equality.  Clearly $\Lambda_T^T \equiv 0$, which shows that the process
$$
   \{ Z_{t} + \Lambda_t^T: 0 \le t \le T \} 
$$ 
is a stochastic logarithm, for all (small enough) $\epsilon$, omitted from notation, so that 
$$
\mathbb{E}_t  \Big( (Z_{t,T} + \Lambda_{t,T}^T) + \tfrac{1}{2} \langle Z + \Lambda^T\rangle_{t,T} \Big)= 0 \;.
$$
Here $Z_{t,T} = Z_T - Z_t,  \, \Lambda_{t,T}^T = \Lambda_{T}^T - \Lambda_{t}^T = - \Lambda_{t}^T$ and with diamond notation
\begin{equation} \label{key-equ}
         \Lambda_{t}^T =   \mathbb{E}_t Z_{t,T} + \tfrac{1}{2} \big( \langle Z + \Lambda^T) \dm ( \langle Z + \Lambda^T) \big)_t(T) \;.
\end{equation} 
By standard arguments, $\Lambda_t^T = \Lambda_t^T (\eps)$ is real analytic at zero\footnote{Seen by applying (conditional) dominated convergence, similar to the proof of real analyticity of the moment generating function; \cite{moran1984introduction,lukacs1970characteristic}.} with 
a.s. positive,  $\cF_t$-measurable radius of convergence and we write $\mathbb{G}^k_t (T,z) =: g_k$ for the ($\cF_t$-measurable) coefficients. Now insert
$$
          \Lambda_{t}^T = \sum_{k \ge 0} g_k \eps^k, \qquad \mathbb{E}_t Z_{t,T}  = \eps^2 z_2  \mathbb{E}_t  \langle Y \rangle_{t,T}, \qquad Z = \eps z_1 Y + \eps^2 z_2 \langle Y \rangle.              $$
into (\ref{key-equ}); match powers of $\eps$ to see $g_0=g_1=0$ and $g_k = \mathbb{G}^k$ as specified in (\ref{eq:Grec}). The multivariate extensions, part (iii), is straightforward and left to the reader. Part (i) goes along the lines of part (i) of Theorem \ref{thm:mainintroK} below,  noting that $A_T = Y_T \in L^{\infty-}$ (all moments) also implies $\langle Y \rangle_T \in L^{\infty-}$.

\subsection{Proof of Theorem \ref{thm:mainintroK}}

Part (ii) is an immediate corollary of Theorem \ref{thm:mainintroG}, applied with $(z,0) \in \mathbb{C}^2$, i.e. $z_2 = 0$. 

We give a proof of part (i) by localization, but also comment on a direct ``Hermite'' proof below. 
\begin{lemma} \label{lem:recwelldef}
Assume $A$ has $n$ moments, $n \in \mathbb{N}$. Then the recursion (\ref{eq:Krec}) 
is well-defined for $j \leqslant n$ and yields $(\mK^j_t(T): 0 \le t \le T)$ as a semimartingale with a $L^{n / j}$-integrable martingale part and a $L^{n / j}$-integrable bounded variation (BV) component. 
\end{lemma}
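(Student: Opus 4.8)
The plan is to prove the lemma by induction on $j\in\{1,\dots,n\}$, carrying the slightly stronger inductive hypothesis that the martingale part $M^j$ of $\mK^j_\bullet(T)$ lies in $\mathcal H^{n/j}$ --- i.e.\ $\langle M^j\rangle_T^{1/2}\in L^{n/j}$, equivalently (by Burkholder--Davis--Gundy) $\sup_{s\le T}|M^j_s|\in L^{n/j}$ --- and that its bounded-variation part has total variation on $[0,T]$ in $L^{n/j}$. Well-definedness of the $j$-th diamond sum in \eqref{eq:Krec}, which by Definition \ref{def:diamond} only requires the relevant covariations to be integrable, will fall out of the same estimates.

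For the base case $j=1$ I would note that $\mK^1_\bullet(T)=\mathbb E_\bullet A_T$ is a martingale with zero BV part; since $A_T\in L^n$, Doob's maximal inequality (when $n>1$) gives $\sup_{s\le T}|\mathbb E_s A_T|\in L^n$ and then BDG gives $\langle\mK^1\rangle_T^{1/2}\in L^n$. (If $n=1$ the recursion asserts only $\mK^1$, and there is nothing further to check.)

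For the inductive step, $2\le j\le n$: assuming the claim for all $k<j$, fix $1\le k\le j-1$, so that $k\le n-1$ and $j-k\le n-1$, hence $M^k\in\mathcal H^{n/k}$ and $M^{j-k}\in\mathcal H^{n/(j-k)}$ with both exponents strictly above $1$. By Kunita--Watanabe, the total variation of $\langle\mK^k,\mK^{j-k}\rangle=\langle M^k,M^{j-k}\rangle$ on $[0,T]$ is at most $\langle M^k\rangle_T^{1/2}\langle M^{j-k}\rangle_T^{1/2}$, which by Hölder with exponents $(n/k,\,n/(j-k))$ --- note $\tfrac kn+\tfrac{j-k}n=\tfrac jn$ --- lies in $L^{n/j}$. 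In particular these covariations are integrable, so every diamond product in \eqref{eq:Krec} is well-defined; and writing $W^{(j)}:=\tfrac12\sum_{k=1}^{j-1}\langle M^k,M^{j-k}\rangle$ we get $\mathrm{TV}(W^{(j)})_{[0,T]}\in L^{n/j}$ and hence $W^{(j)}_T\in L^{n/j}$. Then $\mK^j_t(T)=\mathbb E_t[W^{(j)}_T]-W^{(j)}_t$ exhibits the canonical semimartingale decomposition of $\mK^j_\bullet(T)$: martingale part $\mathbb E_\bullet[W^{(j)}_T]$, closed in $L^{n/j}$, and BV part $-W^{(j)}$ with total variation in $L^{n/j}$. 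If $j<n$, then $n/j>1$, so Doob upgrades the martingale part to $\sup_{s\le T}|\mathbb E_s[W^{(j)}_T]|\in L^{n/j}$ and BDG gives $\langle M^j\rangle_T^{1/2}\in L^{n/j}$, closing the induction; if $j=n$, conditional Jensen gives $\mathbb E_t[W^{(n)}_T]\in L^1$ for every $t$, which is all the last step requires.

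The step I expect to need the most care is the integrability bookkeeping at the endpoint $n/j=1$, where Doob's $L^1$ maximal inequality fails. The way around it is the observation already used above: every covariation that actually appears in the recursion pairs two indices $k$ and $j-k$ that are \emph{both} strictly less than $n$, so Doob is only ever invoked with exponent $p>1$; the exponent $p=1$ occurs solely for the terminal object $\mK^n$, for which only pointwise integrability is asserted and conditional Jensen suffices. (Alternatively one could dispense with Doob entirely and use the version of BDG valid for all $p\in(0,\infty)$ throughout, at the cost of a little extra care in the martingale-closure step.)
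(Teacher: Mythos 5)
Your argument is correct and follows essentially the same route as the paper's proof: induction on the level, Kunita--Watanabe to bound the bracket $\langle M^k, M^{j-k}\rangle$ by $\langle M^k\rangle_T^{1/2}\langle M^{j-k}\rangle_T^{1/2}$, BDG to convert moment bounds into bracket bounds, and generalized H\"older with $\tfrac{k}{n}+\tfrac{j-k}{n}=\tfrac{j}{n}$ to place each term in $L^{n/j}$. Your treatment is if anything slightly more complete, since you write out the full inductive step for all pairings $k+(j-k)=j$ and explicitly handle the endpoint $n/j=1$ where Doob's inequality is unavailable, points the paper leaves implicit after working out $\mK^2$ and $\mK^3$.
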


\begin{proof}
If $A \in L^n$ then also $M:= \mK^1_t := \E_t A \in L^n$. By the Burkholder-Davis-Gundy (BDG) inequality, $\sqrt{\langle M \rangle_T} \in L^n$ or 
$\langle M \rangle_T \in L^{n / 2}$. In case $n \geqslant 2$,
\[ 2\mK_t^2 (T)_{} = (\mK^1_{} \diamond \mK^1_{})_t (T)
 = \E_t \langle M \rangle_T - \langle M \rangle_t \in L^{n / 2} . \]
Call $M^{(2)}$ the martingale part of $\mK_{}^2$; clearly $M^{(2)}$
is a $L^{n / 2}$-martingale.
For $\mK_{}^3 =\mK^2 \diamond
\mK^1$ we first use Cauchy-Schwarz to estimate
\[ | \langle \mK^2, \mK^1 \rangle_T | = | \langle
 M^{(2)}, M \rangle_T | \leqslant \sqrt{\langle M^{(2)} \rangle_T}
 \sqrt{\langle M \rangle_T} . \]
By BDG, the 
right-hand side is the
product of random variables in $L^{n / 2}$ and $L^n$ respectively. Since
$\frac{1}{n / 3} = \frac{1}{n / 2} + \frac{1}{n}$ it follows immediately
from the (generalized) H\"older inequality that $\langle
\mK^2, \mK^1 \rangle_T \in L^{n / 3}$. Assume now $n
\geqslant 3$. Then $M^{(3)} := \E_{\bullet} \langle \mK^2,
\mK^1 \rangle_T$ is a well-defined $L^{n / 3}$-martingale which constitutes 
the martingale part of $\mK^3 = M^{(3)} - \langle
\mK^2, \mK^1 \rangle$. The general statement is that
\[ \mK^j = M^{(j)} + \tmop{BV} \]
with a $L^{n / j}$-martingale part and a bounded variation (BV) component, whenever $j = 1, \ldots n$. The same reasoning gives, by induction in $n$, the general statement. 
\end{proof}

\medskip

Given $A_T$, $\cF_T$-measurable with $N \in \mathbb{N}$ finite moments  --- but whose mgf is not necessarily finite --- we work with its two-sided truncation $(-\ell \vee A_T \wedge \ell)=: A_T^\ell$, followed by careful passage $\ell \to \infty$. 
Indeed, part (iii) applies to $A_T^\ell$ (bounded!) and hence shows that the $\mK^{\ell;n}$, $n=1,2,\dots$ defined by the recursion (\ref{eq:Krec}), started with $\mK^{\ell;1}= \E_\bullet A_T^\ell$, are well-defined and yield (up to a factorial factor) the $n$.th conditional cumulants of $A_T^\ell$. It is easy to see that the $n$.th (conditional) cumulant of $A_T$, which exists by Lemma \ref{lem:recwelldef} for $n \le N$, is the limit (a.s. and in $L^1$) of the corresponding cumulants for $A_T^\ell$, using the conditional dominated-convergence theorem.
It remains to be seen that the diamond recursion is also stable under this passage to the limit. 
The precise integrability properties of the $\mK$'s, obtained in Lemma \ref{lem:recwelldef} for $A_T$, are easily made uniform in the truncation parameter $\ell$; justification of taking $\ell \to \infty$ in the diamond recursion is then straightforward.  

The asymptotic expansion is then a straightforward consequence of validity of the expansion of part (i) for all integers $N$. This finishes the proof of (i).

\begin{remark}[Hermite]  A direct proof of part (i) that relates $\mK^n$, $n=1,2,3,...$ with the corresponding cumulants is possible based on Hermite polynomials. To understand the argument, start with the proof of part (ii), specialized to the cumulant case, so that the key identity reads 
$$
     \mathbb{E}_t e^{\eps Y_{T}} = e^{\eps Y_t + \Lambda_t^T (\eps)} \;.
$$
Rewritten with $Q^T_t (\eps) := -2 ( \mK^2_t (T) + \eps \mK^3_t (T) +
\eps^2 \mK^4_t (T) + \cdots)\,$,
\[ \E_t e^{\eps Y_T} = \E_t e^{\eps Y_T - \frac{\eps^2}{2}
Q_T^T (\eps)} = e^{\eps Y_t - \frac{\eps^2}{2} Q_t^T
(\eps)} \;,
\]
we can deduce, by definition of Hermite polynomials \cite[Ch.IV.3.]{revuz2013continuous}
martingality of
\[ e^{\eps Y_t - \frac{\eps^2}{2} Q_t^T (\eps)} =
\sum_{n \geqslant 0} \frac{\eps^n}{n!} H_n (Y_t, Q_t^T
(\eps)) \;. \]
By taking $(\partial / \partial \eps)^n |_{\eps = 0}$
we obtain a graded family of martingales, starting with ($n=2$)
\[ t \mapsto H_2 (Y_t, Q_t^T (0)) = Y_t^2 - \frac{1}{2} \mK^2_t (T) \;.\]
Applying It\^o's Formula over $[t, T]$ and taking $t$-conditional expectation
then identifies $\mK^2_t (T)$ correctly as $\E_t \langle M \rangle_{t, T} = (M \dm M)_t(T)$. Using
suitable relations between Hermite polynomials, see also \cite{ebrahimifard2018hopfalgebraic}, this argument extends to $n>2$ and provides an alternative 
route to the $\mK$-recursion, also well adapted to finite moment situations, but less elegant than our argument based on stochastic logarithms.
\end{remark} 

\section{Relation to other works}

We already commented in detail on Al\`os et al. \cite{alos2020exponentiation} and Lacoin et al. \cite{lacoin2019probabilistic}. 

Growing exponential expansions on trees are reminiscent of the Magnus expansion, a type of continuous Baker-Campbell-Hausdorff formula, with classical recursions based on rooted binary trees; Iserles--N{\o}rsett \cite{iserles1999solution}. And yet, the $\mF, \mK, \mG$ expansions are of a fundamentally different nature, for non-commutative algebra plays no r\^ole: our setup is one of multivariate random variables, associated martingales and their quadratic variation processes.

In a Markovian situation our expansion can be related to perturbative expansion of a ``KPZ'' type equation, by which we here mean a non-linear parabolic partial differential equation of HJB type. We make this explicit in the case when $A=f(B)$, for a Brownian motion $B$ and suitable $f$, in which case the $\mK$'s are described by a cascade of linear PDEs, detailed in Section \ref{sec:KPZ}, indexed by trees such as (\ref{ex:cum1to4}), in the exact same way as the ``Wild expansion" used in Hairer's KPZ analysis \cite{hairer2013solving}. (This link is restricted to the algebraic part of the expansions and rough paths, analytic renormalisation etc. play no r\^ole here.) 

That said, computing $\log \eef{ e^{\varepsilon A_T}}$ may also be viewed as a (linear) backward SDE with ``Markovian'' terminal data given by $e^{\varepsilon A_T}= e^{\varepsilon f(B_T)}$; upon suitable exponential change of variables this becomes  a quadratic BSDE as studied by Kobylanski \cite{kobylanski2000backward}, Briand-Hu \cite{briand2008quadratic} and many others, in the weakly non-linear regime (BSDE driver of order $\eps$). Yet another point of view comes from Dupire's functional It\^o-calculus \cite{dupire2019functional}  which would lead to similar (at least formal) computations as conducted in Section \ref{sec:KPZ}, for general $\cF_T$-measurable $A_T$. And yet another point of view comes from the Bou\'e--Dupuis \cite{boue1998variational}  Formula which gives an exact variational representation of 
$ \log \ee{e^{A_T}} $ when $A_T$ is a sufficiently integrable measurable function of Brownian motion up to time $T$; here (\ref{eq:ConvCGF}) can be viewed as an asymptotic solution to the Bou\'e--Dupuis variational problem in the weakly non-linear regime. %

In Section \ref{sec:WScum} we compare Theorem \ref{thm:mainintroK} to the work of Nourdin--Peccati \cite{nourdin2010cumulants} 
where the authors use Malliavin integration by parts to describe cumulants of certain Wiener functionals, and notably compute cumulants of elements in a fixed Wiener chaos. (The ability to work under exponential resp. (sub)exponential integrability assumptions is crucial to deal with second resp. higher order chaos.) An important element in the second chaos with explicit cumulants is provided by {\it L\'evy's stochastic area}, our (short) proof of its cumulant generating function should be compared with the combinatorial tour de force of \cite{levin2008combinatorial}, based on (signature) moments. It is conceivable that the multivariate cumulant formula applied to multidimensional Brownian motion and L\'evy area (a.k.a the Brownian rough path) provides new input into classical problems of stochastic numerics, \cite{gaines1997variable}.

In Section \ref{sec:SV} we apply Theorem \ref{thm:mainintroG} to establish a formula for the joint mgf of a process $X$, its quadratic variation $\angl{X}$, and $\eef{d\angl{X}_T / dT}$, quantities (a.k.a. log-price, total variance, forward variance)  that play an important role in stochastic financial modeling. Our expansion is most convenient for models written in forward variance form, state of the art in quantitative finance. In particular, the full expansion is computable in affine forward variance models, which includes the popular rough Heston model \cite{el2019characteristic}.

{\bf Acknowledgement.} PKF has received funding from the European Research Council (ERC) under the European Union's Horizon 2020 research and innovation programme (grant agreement No. 683164) and the DFG Excellence Cluster MATH+ (Projekt AA4-2).

\section{Examples}

\subsection{Brownian motion }

\begin{example}[Brownian motion with drift]
Let $A_t = \sigma B_t + \mu t $. Then 
$$ \mK^{1}_t (T) = \sigma B_t + \mu T = A_t + \mu (T-t), \ \ \mK^{2}_t (T)  = \frac{1}{2}  ( \mK^{1} \dm \mK^{1} )_{t} (T)  = \frac{1}{2} \sigma^2 (T-t) \; .$$
and $\mK^k \equiv 0$ for all $k \ge 3$. These are the cumulants of $A_T - A_t \sim N (\mu(T-t), \sigma^2 (T-t))$, as predicted by Theorem \ref{thm:mainintroG} (or Theorem \ref{thm:mainintroK}) and the $\mK$-forest expansion of the cumulant generating function (\ref{eq:ConvCGF}) is trivially convergent (with infinite convergence radius).

\end{example}

\begin{example}[Stopped Brownian motion]
Consider the martingale $A = B^\tau$, standard Brownian motion $B$ stopped at reaching $\pm 1$. We compute $$\mK^{1}_t(T) = \E_t B^\tau_T = B^\tau_t = B_{t \wedge \tau}, \ \  \mK^{2}_t(T) = \tfrac{1}{2} \E_t \angl{B^\tau}_{t,T} =  \tfrac{1}{2} \Big( \E_t (\tau \wedge T) - \tau \wedge t \Big), \ \ \dots \;. $$ The second quantity equals the conditional variance $\mathbb{V}_t (B^\tau_T) = \E_t (B^\tau_T)^2 - (B^\tau_t)^2$, and thus ``contains'' familiar identities from optional stopping. With $T=\infty$, $A_T = B_\tau$ takes values $\pm 1$ with equal probability. This is a bounded random variable, with globally defined and real analytic time-$t$ conditional cgf given by 
$$ \Lambda_t (x) = \log\Big( \tfrac{1}{2}[(1+B^\tau_t)e^x + (1-B^\tau_t)e^{-x}] \Big) \;.$$ 
Its convergence radius is random through the value of $B^\tau_t=B^\tau_t(\omega) \in [-1,1]$. For instance, when $t=0$, so that $B^\tau_t = 0$, we have $\Lambda_0 (x) = \log \cosh(x)$ with a $\mK$-forest expansion (\ref{eq:ConvCGF}) of finite convergence radius $\rho_0 = \pi / 2$. On the other hand, on the event $E := \{ B^\tau_t = \pm 1 \}$,  the cgf $\Lambda_t (x)$ trivially takes the value $\log e^{\pm x} = \pm x$ so that, on $E$, we have $\rho_t (\omega) = +\infty$.
\end{example}

\subsection{L\'evy area}\label{sec:levy}

We give a new proof of P. L\'evy's theorem, which compares favourably with other available proofs \cite{ikeda2014stochastic}, \cite{levin2008combinatorial}.

\begin{theorem}[P. L\'evy]
Let $\{X,Y\}$ be 2-dimensional standard Brownian motion and stochastic (``L\'evy") area be given by  
$$
\cA_t =\int_0^t \left(X_s\,dY_s - Y_s\,dX_s\right) \;.
$$
Then, for $T \in \big(-\tfrac{\pi}{2},\tfrac{\pi}{2}\big)$,
$$
\E_0\left[e^{\cA_T}\right] = \frac{1}{\cos T} =  \exp \left( \int_0^T\,\tan s\,ds \right) \;. 
$$
\end{theorem}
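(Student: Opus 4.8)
The plan is to apply the multivariate cumulant recursion of Theorem \ref{thm:mainintroK} (or equivalently Theorem \ref{thm:mainintroG} with $z_2=0$) to the martingale $Y=\cA_\bullet$ and compute all the diamond iterates explicitly. The key structural fact is that $\cA$ is itself a martingale (being a sum of It\^o integrals), so $Y_t = \E_t \cA_T = \cA_t$, and $\mK^1_t(T)=\cA_t$. Since we want $\E_0[e^{\cA_T}]$ we will specialize to $t=0$, where $\cA_0 = 0$, so only the terms with no surviving $\cA_t$ at time $0$ contribute; concretely, we need $\Lambda_0^T(1) = \sum_{n\ge 2}\mK^n_0(T)$ and then $\E_0[e^{\cA_T}] = \exp(\Lambda_0^T(1))$, valid for $|T|$ small by part (ii) of the theorem once exponential integrability of $\cA_T$ is checked (which holds on $(-\pi/2,\pi/2)$, consistent with the claimed blow-up of $1/\cos T$).

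The computational heart is the diamond algebra. First compute $\langle \cA \rangle$: since $d\cA_t = X_t\,dY_t - Y_t\,dX_t$ and $X,Y$ are independent Brownian motions, $d\langle\cA\rangle_t = (X_t^2+Y_t^2)\,dt =: R_t\,dt$ where $R=X^2+Y^2$ is the squared Bessel(2) process. Hence $2\mK^2_t(T) = (\cA\dm\cA)_t(T) = \E_t\int_t^T R_s\,ds$. Using $\E_t R_s = R_t + 2(s-t)$ one gets $\mK^2_t(T) = \tfrac12\big(R_t(T-t) + (T-t)^2\big)$, and at $t=0$ this is $\tfrac12 T^2$. The next step is to recognize the pattern: I expect each $\mK^n_t(T)$ to be a polynomial in $(T-t)$ with coefficients that are polynomials in $R_t$ (and, for odd-indexed terms, linear in $\cA_t$), and that at $t=0$ only the top power of $T$ survives with a numerical coefficient. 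The claim $\Lambda_0^T(1) = \int_0^T \tan s\,ds = \sum_{k\ge 1}\tfrac{(-1)^{k-1}2^{2k}(2^{2k}-1)B_{2k}}{(2k)!}\cdot\tfrac{T^{2k}}{2k}$ (tangent numbers) tells us precisely which coefficients $\mK^{2k}_0(T)$ must produce and that all odd $\mK^{2k+1}_0(T)$ vanish — the latter being plausible from the symmetry $\cA \to -\cA$ (equivalently $(X,Y)\to(X,-Y)$). So the strategy is: set up the recursion $\mK^{n+1}=\tfrac12\sum \mK^k\dm\mK^{n+1-k}$, carry along the ansatz that $\mK^n_t(T)$ lies in the finite-dimensional space spanned by $\cA_t^a R_t^b (T-t)^c$ with $a\in\{0,1\}$, and show the diamond product closes on this space, reducing the whole thing to a scalar recursion for the $t=0$ coefficients that one then matches to the tangent numbers — alternatively, guess the generating identity $\E_t e^{\cA_T}$ directly and verify it solves the relevant martingale/PDE equation.

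The cleanest route for the verification, which I would actually pursue in parallel, is to bypass the term-by-term matching: since $A=f(B)$-type reasoning applies (here with the two-dimensional driver $(X,Y)$), $u(t,x,y):=\E\big[e^{\cA_T}\mid X_t=x,Y_t=y\big]$ is Markovian, and by rotational invariance $u$ depends only on $r=x^2+y^2$ and $T-t$; solving the associated linear parabolic PDE $\partial_t u + \tfrac12\Delta u + (\text{drift from the }\cA\text{ terms}) = 0$ with terminal data $u(T,\cdot)=e^{\cA_T}$ — or rather working with the conditional cgf $\Lambda_t(x)$ which satisfies a Riccati-type equation in the $r$-variable — yields $\Lambda_0^T(1)=-\log\cos T$. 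This is essentially the classical computation, but the point of the section is to show it drops out of the $\mK$-recursion. The main obstacle I anticipate is precisely bookkeeping the diamond recursion: verifying that the ansatz space is closed under $\dm$ and tracking the combinatorial coefficients so they assemble into the tangent numbers $\sum_{k\ge1}\tfrac{(-1)^{k-1}2^{2k}(2^{2k}-1)B_{2k}}{(2k)!(2k)}T^{2k}$; checking exponential integrability of $\cA_T$ (needed for Theorem \ref{thm:mainintroK}(ii)) up to the critical threshold is a secondary technical point, handled by a standard estimate on the squared Bessel process or by the known Gaussian-chaos tail of $\cA_T$.
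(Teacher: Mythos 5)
Your strategy is the same as the paper's (feed $\mK^1_t=\cA_t$ into the diamond recursion, observe the odd terms vanish, show the even terms live in a small ansatz space closed under $\dm$, resum at $t=0$), and your $\mK^2$ is computed correctly. But the proposal stops exactly where the proof has to be done: the entire content of the argument is the closure computation that you defer as an ``anticipated obstacle''. Concretely, the paper sets $J^k_t(T)=\tfrac{(T-t)^k}{k}+\tfrac12(X_t^2+Y_t^2)(T-t)^{k-1}$, notes $dJ^k_s=(X_s\,dX_s+Y_s\,dY_s)(T-s)^{k-1}+\mathrm{BV}$, and proves the lemma $(J^j\dm J^k)_t(T)=\tfrac{2}{j+k-1}J^{j+k}_t(T)$; this is what makes the ansatz close with explicit constants, gives $\mK^n=\alpha_n J^n$ for even $n$ and $\mK^{n+1}\equiv0$ for odd $n\ge3$ (because $d\langle\mK^1,J^n\rangle_s=(X_sY_s-Y_sX_s)(T-s)^{n-1}\,ds\equiv0$ — note your symmetry $(X,Y)\mapsto(X,-Y)$ only gives the unconditional statement at $t=0$; the induction needs the conditional vanishing, which the recursion delivers for free), and produces the scalar recursion $\alpha_n=\tfrac{1}{n-1}\sum_{j+k=n}\alpha_j\alpha_k$, $\alpha_2=1$. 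Without this lemma your ``matching to tangent numbers'' has nothing to match; and the identification is cleaner than Bernoulli-number bookkeeping: $f(T)=\sum_{j\ \mathrm{even}}\alpha_jT^{j-1}$ satisfies $f'=1+f^2$, $f(0)=0$, hence $f=\tan$ and $\sum_{n\ge2}\mK^n_0(T)=\int_0^T\tan s\,ds=-\log\cos T$.

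Two further remarks. Your fallback route (Markovian PDE/Riccati in the radial variable) would of course prove the identity, but it is the classical computation the paper explicitly wants to avoid, so it is a substitute proof rather than a completion of the cumulant argument. Minor points: the ansatz $\cA_t^aR_t^b(T-t)^c$ with arbitrary powers of $R_t$ is larger than necessary — everything stays affine in $R_t$, which is what makes the bookkeeping finite; and the integrability input for Theorem \ref{thm:mainintroK}(ii) is only that $\cA_T$ has exponential moments in a neighbourhood of zero ($\E_0 e^{x\cA_T}<\infty$ for $|x|T<\pi/2$), the restriction $T\in(-\pi/2,\pi/2)$ entering when the convergent series is evaluated at $z=1$.
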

As a warmup, we compute the first few cumulants, using the $\mK$-recursion from Theorem \ref{thm:mainintroK}.
(We note $\langle \cA_T \rangle_T \notin L^\infty$, so that, strictly speaking, the result in \cite{lacoin2019probabilistic} is not applicable.)  
By a direct computation (or using a very special case of Theorem \ref{thm:signature}),
\beas
\mK^2  =\frac12\,\AAd
= \frac12\,(\cA \dm \cA)_t(T) &=&\frac12\,\int_t^T \left(\eef{X_s^2} + \eef{Y_s^2}\right)\,ds\\
&=& \frac{1}{2}\,(T-t)^2 +\frac12\,(X_t^2+Y_t^2)\,(T-t)
\eeas
\[
\implies \quad d\mK^2_s = (X_s\,dX_s+Y_s\,dY_s)(T-s) + \tmop{BV}.
\]
With $d\mK^1_s =  X_s\,dY_s - Y_s\,dX_s$ we see that the third forest vanishes,
\beas
\mK^3  = \mK^1 \dm \mK^2 = \eef{\int_t^T d\angl{\mK^1,\mK^2}_s} = \eef{\int_t^T \left[X Y\,d\angl{Y}_s-Y X\,d\angl{X}_s\right]\,(T-s)}= 0 \;.
\eeas

\begin{lemma} Set $J^{k}_t(T) := \frac{(T-t)^{k}}{k} + \frac12\,(X_t^2+Y_t^2)\, (T-t)^{k-1}$. Then
\[
\left(J^{j} \dm J^{k}\right)_t(T) = \frac{2}{j+k-1} \,J^{(j+k)}_t(T) \;.
\]
\end{lemma}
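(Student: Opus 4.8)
The plan is to compute the diamond product $(J^j \dm J^k)_t(T) = \eef{\langle J^j, J^k \rangle_{t,T}}$ directly from the definition, using that the only stochastic driver in $J^k_t(T)$ comes through the $(X_t^2 + Y_t^2)$ term. First I would apply It\^o's formula to find the martingale part of $s \mapsto J^k_s(T)$: since $J^k_s(T) = \tfrac{(T-s)^k}{k} + \tfrac12 (X_s^2 + Y_s^2)(T-s)^{k-1}$, the bounded-variation pieces (the $dt$ terms from $d(X_s^2+Y_s^2)$, the $ds$-differential of the deterministic part, and the $ds$-differential of $(T-s)^{k-1}$) do not contribute to the covariation, and we are left with $d J^k_s \stackrel{\mathrm{mart}}{=} (X_s\, dX_s + Y_s\, dY_s)(T-s)^{k-1}$. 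This mirrors the $k=2$ computation already shown in the excerpt for $\mK^2$.

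Next I would compute the covariation: using $\langle X \rangle = \langle Y \rangle = \mathrm{id}$ and $\langle X, Y \rangle = 0$,
\[
d\langle J^j, J^k \rangle_s = (X_s^2 + Y_s^2)\,(T-s)^{j-1}(T-s)^{k-1}\, ds = (X_s^2+Y_s^2)\,(T-s)^{j+k-2}\,ds \;.
\]
Then
\[
(J^j \dm J^k)_t(T) = \eef{\int_t^T (X_s^2 + Y_s^2)(T-s)^{j+k-2}\, ds} = \int_t^T \eef{X_s^2 + Y_s^2}\,(T-s)^{j+k-2}\, ds \;,
\]
and since $\eef{X_s^2} = X_t^2 + (s-t)$ for $s \ge t$ (and likewise for $Y$), this is
\[
\int_t^T \big( (X_t^2 + Y_t^2) + 2(s-t) \big)(T-s)^{j+k-2}\, ds \;.
\]

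Finally I would evaluate the two elementary integrals over $[t,T]$: $\int_t^T (T-s)^{j+k-2}\,ds = \tfrac{(T-t)^{j+k-1}}{j+k-1}$ and $\int_t^T (s-t)(T-s)^{j+k-2}\,ds = \tfrac{(T-t)^{j+k}}{(j+k-1)(j+k)}$ (a Beta-integral, or integration by parts). Substituting gives
\[
(J^j \dm J^k)_t(T) = \frac{(X_t^2+Y_t^2)(T-t)^{j+k-1}}{j+k-1} + \frac{2(T-t)^{j+k}}{(j+k-1)(j+k)} = \frac{2}{j+k-1}\left( \frac{(T-t)^{j+k}}{j+k} + \frac12 (X_t^2+Y_t^2)(T-t)^{j+k-1} \right),
\]
which is exactly $\tfrac{2}{j+k-1} J^{j+k}_t(T)$, as claimed. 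The computation is entirely routine; the only point requiring a little care is correctly isolating the martingale part of $J^k$ (discarding all finite-variation contributions before taking the bracket) and getting the conditional second moment $\eef{X_s^2+Y_s^2}$ right, but both are straightforward applications of It\^o's formula and the Markov property of Brownian motion. There is no substantial obstacle.
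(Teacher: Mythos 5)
Your proof is correct and follows essentially the same route as the paper, which identifies the martingale part $dJ^{k}_s = (X_s\,dX_s + Y_s\,dY_s)(T-s)^{k-1} + \mathrm{BV}$ and then repeats the conditional-expectation computation done for $\mK^2$; you have simply written out the bracket, the conditional second moments, and the elementary time integrals explicitly. No gaps.
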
 
\begin{proof}  With $dJ^{k}_s(T) = \left(X_s\,dX_s + Y_s\,dY_s \right)
\,(T-s)^{k-1}+\tmop{BV}$, computation as above. 
\end{proof} 
Note $\mK^n_t(T) = \alpha_n\,J^{n}_t(T)$ for $n=2,3$ with $\alpha_2=1,\alpha_3=0$. Assume by induction that this holds true up even/odd pair $(n-2,n-1)$, with $\alpha_{n-1} = 0$.
Then the cumulant recursion gives, with sum over even integers $j,k \ge 2$,
$$
2 \mK^{n} = 
\sum_{j+k = n} \alpha_j \alpha_k J^j \dm J^k
=  \sum_{j+k = n} \alpha_j \alpha_k \frac{2}{n-1} \,J^{n} \  =: 2 \alpha_n J^{n}  \ ,
$$
while $\mK^{n+1} = \mK^{1} \dm \mK^{n}$ (use $\mK^3, \dots, \mK^{n-1} = 0$), which vanishes for the same reason as $\mK^3$. (This completes the induction.) Hence
$$
       \alpha_n = \frac{1}{n-1} \big( \alpha_2 \alpha_{n-2} + \alpha_4 \alpha_{n-4} + \dots +   \alpha_{n-2} \alpha_2 \big), \qquad \alpha_ 2 = 1.
$$
Evaluated at $t=0$, using $J^{n}_0(T) = T^n /n$, we thus see the $\mK$-expansion take the form
$$
    \alpha_2T^2/2 + \alpha_4 T^4/4 + \alpha_5 T^6/6 + ... = T^2/2 + \tfrac{1}{3} T^4/4 + \tfrac{2}{15} T^6/6 + ...
$$
where one starts to see the integrated expansion of $\tan(T) = T +  \tfrac{1}{3} T^3 + \tfrac{2}{15} T^5 + ... $ integrated in time. To see that this is really so,
check that $f(T) := \sum_{j \in 2\mathbb{N}} \alpha_{j} T^{j-1}$ satisfies the ODE $f'(T)=f(T)^2+1$, with $f(0)=0$, which identifies $f \equiv \tan$. 

\subsection{Diamond products of iterated stochastic integrals} \label{sec:DPISI}

L\'evy's area is a particularly important example of Brownian iterated integrals, for which we have given explicit diamond computations.
We now present systematic diamond computations for iterated stochastic integrals,
which play a fundamental role in stochastic numerics and rough path theory \cite{kloeden1992numerical, lyons1998differential}. They are defined as follows.
For a word $a = i_1 \ldots .i_m$ of length $m$, with letters in $\mathbb{A}=
\{ i : 1 \leqslant i \leqslant d \}$, write $a i$ for the word (of length $m +
1$) obtained by concatenation of $a$ with the letter $i.$ Given a
$d$-dimensional Brownian motion $(B^i)$, introduce the iterated It\^o resp.
Stratonovich integrals
\[ 
B^{a i} = \int_0^{\bullet} B^a d B^i; \quad \widehat{B}^{a i} =
\int_0^{\bullet} \hat{B}^a \circ d B^i \;;
\]
set also $B^{\phi} = \hat{B}^{\phi}=1$ when $\phi$ is the empty word. One extends these definitions by linearity to linear combination of words, which becomes a commutative algeba under the shuffle product (e.g. $12 \shuffle 3 = 312+132+123$). Then the remarkable identity 
$$
\hat{B}_t^a\, \hat{B}_t^b = \hat{B}^c_t; \quad  c = a \shuffle b,
$$
holds true (and reflects the validity of the usual chain rule for Stratonovich integration). In contrast, resolving $B_t^a B_t^b$ requires quasi-shuffle (It\^o Formula) which we will not introduce here.
Let us also recall Fawcett's Formula (from e.g. Ch.3 in \cite{friz2014course})
\[ 
\E_0 \hat{B}_{0, 1}^a = \langle e_{\otimes}^{\mathbf{I}/ 2}, a \rangle
\backassign   \sigma_a \;.
\] 
\begin{theorem}\label{thm:signature}
Consider two (possibly empty) words $a, b$ with respective length $| a |, |
b |$ and letters $i = j \in \mathbb{A}.$ Then

\noindent (It\^o)
\[ (B^{a i} \diamond B^{b j})_t (T) = B_t^a\, B_t^b (T - t) + \frac{T - t}{1 +
 (| a | + | b |) / 2} (B^a \diamond B^b)_t (T) \]
\noindent  (Stratonovich)
\[ (\hat{B}^{a i} \diamond \hat{B}^{b j})_t (T) =\hat{B}_t^a\, \hat{B}_t^b (T
 - t)  + \hat{B}_t^a \sigma_b \frac{(T - t)^{\frac{| b |}{2} + 1}}{\frac{|
 b |}{2} + 1} + \hat{B}_t^b \sigma_a \frac{(T - t)^{\frac{| a |}{2} +
 1}}{\frac{| a |}{2} + 1} + \frac{T - t}{1 + \frac{| a | + | b |}{2}}
 (\hat{B}^a \diamond \hat{B}^b)_t (T) . \]
In case $i \ne j$ both diamond products vanish.
\end{theorem}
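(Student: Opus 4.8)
The plan is to unwind the diamond product directly from its definition $(A\diamond B)_t(T)=\E_t[\langle A,B\rangle_{t,T}]$, using that every iterated integral is one stochastic integration against a coordinate Brownian motion. First I would reduce to the case $i=j$. Since $dB^{ai}_s=B^a_s\,dB^i_s$ (and the martingale part of $\hat B^{ai}$ is likewise $\int \hat B^a\,dB^i$), the covariation $\langle B^{ai},B^{bj}\rangle$ has density $B^a_sB^b_s\,d\langle B^i,B^j\rangle_s=\delta_{ij}\,B^a_sB^b_s\,ds$; for $i\neq j$ this vanishes identically, so both diamond products are $0$. For $i=j$ the same computation gives $(B^{ai}\diamond B^{bi})_t(T)=\E_t\int_t^T B^a_sB^b_s\,ds=\int_t^T \E_t[B^a_sB^b_s]\,ds$, reducing everything to the running conditional second moment $s\mapsto\E_t[B^a_sB^b_s]$ (resp. its Stratonovich analogue).

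In the Itô case I would then expand $\E_t[B^a_sB^b_s]$ by Itô's product formula, $d(B^a_sB^b_s)=B^b_s\,dB^a_s+B^a_s\,dB^b_s+d\langle B^a,B^b\rangle_s$: because $B^a,B^b$ are martingales, applying $\E_t$ over $[t,s]$ kills the stochastic integrals and leaves $\E_t[B^a_sB^b_s]=B^a_tB^b_t+\E_t\langle B^a,B^b\rangle_{t,s}=B^a_tB^b_t+(B^a\diamond B^b)_t(s)$, the diamond of the shorter words with running terminal time $s$. Integrating over $s\in[t,T]$ gives the recursion $(B^{ai}\diamond B^{bi})_t(T)=B^a_tB^b_t(T-t)+\int_t^T (B^a\diamond B^b)_t(s)\,ds$. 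To convert this into the stated closed form I would appeal to the Brownian self-similarity of iterated integrals: unwinding the recursion down to length-one words, where $(B^c\diamond B^c)_t(s)=s-t$ is linear in the time increment, together with scaling shows that $(B^a\diamond B^b)_t(\cdot)$ scales as the $(|a|+|b|)/2$-th power of the time increment, so that $\int_t^T (B^a\diamond B^b)_t(s)\,ds=\frac{T-t}{1+(|a|+|b|)/2}(B^a\diamond B^b)_t(T)$, which is the Itô identity; the quasi-shuffle (Itô) expansion of $B^a_tB^b_t$ into iterated integrals $B^c_t$ provides a combinatorial cross-check.

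For the Stratonovich version the same template applies, but $\hat B^a$ is a genuine semimartingale: writing $a=a'k$ with $k\in\mathbb{A}$ its last letter, $d\hat B^a_s=\hat B^{a'}_s\circ dB^k_s=\hat B^{a'}_s\,dB^k_s+\tfrac12\,d\langle \hat B^{a'},B^k\rangle_s$, so $\E_t[\hat B^a_s]$ picks up a bounded-variation drift whose magnitude is governed by Fawcett's formula $\sigma_a=\E_0\hat B^a_{0,1}=\langle e_\otimes^{\mathbf{I}/2},a\rangle$ — indeed by scaling $\E_0[\hat B^a_s]=\sigma_a s^{|a|/2}$, and in general $\E_t[\hat B^a_s]$ splits as $\hat B^a_t$ plus a homogeneous drift term carrying the $\sigma$-coefficients. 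Feeding this into the Itô-product expansion of $\E_t[\hat B^a_s\hat B^b_s]$, the covariation part reproduces $\frac{T-t}{1+(|a|+|b|)/2}(\hat B^a\diamond\hat B^b)_t(T)$ as before, while the two cross-terms — $\hat B^a_t$ paired with the drift part of $\hat B^b$, and $\hat B^b_t$ paired with that of $\hat B^a$ — contribute, after the $s$-integration, exactly $\hat B^a_t\,\sigma_b\frac{(T-t)^{|b|/2+1}}{|b|/2+1}$ and $\hat B^b_t\,\sigma_a\frac{(T-t)^{|a|/2+1}}{|a|/2+1}$; here the shuffle identity $\hat B^a_t\hat B^b_t=\hat B^{a\shuffle b}_t$ is a handy sanity check.

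The step I expect to be the main obstacle is precisely the middle one — getting the homogeneity degrees right, hence the exact constants $1+(|a|+|b|)/2$ and $|b|/2+1$ — and, in the Stratonovich setting, cleanly isolating the Itô–Stratonovich drift so that Fawcett's formula can be applied term by term. I would organize everything as an induction on $|a|+|b|$, using at each step the decomposition of $\hat B^{ai}\hat B^{bj}$ (resp. $B^{ai}B^{bj}$) into its martingale, drift, and covariation parts, and reading off the claimed formulas from the recursion.
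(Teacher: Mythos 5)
Your proposal is essentially the paper's own proof: reduction to $i=j$ through the covariation density $\delta^{ij}B^a_sB^b_s\,ds$, the product-rule/martingale identity $\E_t[B^a_sB^b_s]=B^a_tB^b_t+(B^a\diamond B^b)_t(s)$, Brownian scaling to evaluate $\int_t^T(B^a\diamond B^b)_t(s)\,ds$, and, in the Stratonovich case, splitting $\hat B^{ai}=\int \hat B^a\,dB^i+\mathrm{BV}$ and inserting the expected signature $\E_t\hat B^b_{t,s}=\sigma_b\,(s-t)^{|b|/2}$ to produce the two cross terms. The homogeneity step you single out as the likely obstacle is exactly the step the paper takes as well, phrased there as equality of the time-$t$ conditional law of $(B^a\diamond B^b)_t(s)$ with that of $\bigl(\tfrac{s-t}{T-t}\bigr)^{(|a|+|b|)/2}(B^a\diamond B^b)_t(T)$, so you have reproduced the argument rather than deviated from it. One caution, which applies to your ``unwinding to length-one words'' justification just as to the paper's scaling assertion: for $t>0$ the recursion yields $(B^a\diamond B^b)_t(s)$ as a polynomial in $(s-t)$ with $\cF_t$-measurable coefficients rather than a single pure power (e.g.\ $(B^{11}\diamond B^{11})_t(s)=(B^1_t)^2(s-t)+\tfrac12(s-t)^2$), so exact $(|a|+|b|)/2$-homogeneity holds for the leading term (and at $t=0$) but not termwise, and this is precisely where your planned induction needs the extra care you anticipate.
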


\begin{proof}
\noindent {\bf (It\^o)} By It\^o isometry, and the product rule $B_s^a B_s^b = B_t^a B_t^b + 
\ldots . + \langle B^{a i}, B^{b j} \rangle_{t, s} $, with omitted martingale increment $\int_t^s( B^a dB^b + B^b dB^a)$,
  \[ 
(B^{a i} \diamond B^{b j})_t (T) = \E_t \langle B^{a i}, B^{b j}
 \rangle_{t, T} 
 = \delta^{i j} \E_t \int_t^T B_s^a B_s^b d s 
 = \delta^{i j}
 \int_t^T (B_t^a B_t^b + (B^a \diamond B^b)_t (s)) d s 
 \]
From the scaling properties of Brownian motion,  the time $t$-conditional law of $(B^a \diamond
B^b)_t (s)$ is equal to the law of
\[
\left( \frac{s - t}{T - t} \right)^{\frac{| a | + | b |}{2}} (B^a
 \diamond B^b)_t (T), 
 \]
followed by an immediate integration over $s \in [t, T]$.

\noindent {\bf  (Stratonovich)} Note that
\[ 
\hat{B}^{a i} = \int \hat{B}^a d B^i + \tmop{BV} 
\]
so that, as in the It\^o case (but now with non-centered dots),
\[
(\hat{B}^{a i} \diamond \hat{B}^{b j})_t (T) 
= \delta^{i j} \,\E_t \int_t^T
 \hat{B}_s^a \hat{B}_s^b d s = \delta^{i j} \int_t^T ds\, (\hat{B}_t^a
 \hat{B}_t^b + \hat{B}_t^a \E_t \hat{B}_{t, s}^b + \hat{B}_t^b \E_t
 \hat{B}_{t, s}^a + (\hat{B}^a \diamond \hat{B}^b)_t (s)) . 
 \]
Using the (known) Stratonovich expected signature of Brownian motion,
\[ \E_t \hat{B}_{t, s}^b = (s - t)^{| b | / 2} \E_0 \hat{B}_{0, 1}^b = (s -
 t)^{{| b |}/{2}} \langle e_{\otimes}^{\mathbf{I}/ 2}, b \rangle = : (s -
 t)^{{| b |}/{2}} \sigma_b
\]
we see, with $i = j$,
\[ 
(\hat{B}^{a i} \diamond \hat{B}^{b i})_t (T) 
= \hat{B}_t^a \hat{B}_t^b (T
 - t) + \hat{B}_t^a \sigma_b \frac{(T - t)^{\frac{| b |}{2} + 1}}{\frac{|
 b |}{2} + 1} + \hat{B}_t^b \sigma_a \frac{(T - t)^{\frac{| a |}{2} +
 1}}{\frac{| a |}{2} + 1} + \frac{T - t}{1 + \frac{| a | + | b |}{2}}
 (\hat{B}^a \diamond \hat{B}^b)_t (T) .
 \]

\end{proof}

\begin{example}[Cameron--Martin formula]
Following \cite[Ch.XI.1]{revuz2013continuous}, the Laplace transform of $\int_0^1 B_s^2 ds$ is given by $$\left ( \cosh \sqrt{2 \lambda} \right)^{-1/2} = 
\exp (- \tfrac{1}{2} \lambda + \tfrac{1}{6} \lambda^2 - \tfrac{4}{45}\lambda^3  + ...) \;.$$
We can elegantly obtain this from the  $\mG$-expansion applied to the iterated It\^o integral $Y_t = \int_0^t B_s dB_s, \langle Y \rangle_1 = \int_0^1 B_s^2 ds$, so that $ \mG^2  = - \lambda (Y \dm Y)_t(T)$, and for $k>2$: 
$ \mG^k = \frac{1}{2} \sum_{j = 2}^{k -2} \mG^{k - j} \dm \mG^j  \, $.
A computation similar to the one given in the L\'evy area example, shows that the $n$.th cumulant is  given by $q_n/(2n)$, with recursion
$$
 q_n = \frac{2}{2 n - 1} (q_1 q_{n - 1} + \cdots + q_{n - 1} q_1), \qquad q_1 = 1 ,
$$
from which one can also obtain the explicit functional form. 
\end{example}

\subsection{Bessel process}
We use the $\mG$-expansion to establish some identities of the {\it Bessel square process} with (time-dependent) dimension $\delta = \delta (t) \ge 0$, given as solution to
$$
      d X_t = 2 \sqrt{X_t} d B_t + \delta (t) dt  \;.
$$ 
As in the case of L\'evy area, the $\mG$-expansion with diamond calculus compares (very) favourably with other available proofs, cf. \cite[Ch.XII]{revuz2013continuous}. 
For non-negative, bounded measurable $\mu = \mu (t)$, set $Y_T := \frac{1}{2} \int_0^T \sqrt{\mu} \,d X_s$, 
hence $dY_s = \frac{1}{2} \sqrt{\mu}\, d X_s$ so that the $\mG$-expansion gives the Laplace transform of the weighted Bessel average
\[ \langle Y \rangle_T = \int_0^T X_s \,\mu \,d s \;,\]
starting with
\[ \mG^2 = - \lambda \,Y \dm Y = \dots = 
-\lambda \left( X_t \int_t^T \mu (s) d s
+ \int_t^T \int_t^s \delta (r) \mu (s) d r d s \right), \]
followed by $\mG^3 = 0$. (Use $\E_t X_s = X_t + \int_t^s \delta (r) dr, t \ge s$.) By Lemma \ref{lem:cool} below,
\[ \log \E_t \exp \left( - \lambda \int_t^T X_s\, \mu_s\, d s \right) = 
\sum_{n \ge 2, \tmop{even}} \frac{(-
\lambda)^{n / 2}}{2}  \left( X_t \Gamma_n (t) + \int_t^T \delta (r)
\Gamma_n (r) d r \right) , \]
with $\psi (t) := \sum_{n \ge 2,\tmop{even}} (- \lambda)^{n / 2} \Gamma_n (t)$ rewritten as 
\[ \log \E_t \exp \left( - \int_t^T X (s) \mu (s) d s \right) =   \frac{1}{2} X_t \psi (t) + \frac{1}{2} \int_t^T \delta (r) \psi (r) d r \;.\]
Thanks to (\ref{GnODE}) $\psi$, is immediately identified as (unique) backward ODE solution to $ - \dot{\psi} = \lambda \dot{\Gamma}_2 + \psi^2 = - 2 \lambda \mu + \psi^2$ with terminal data $\psi(T) =0$. 
(This constitutes a new and elegant route to Cor 1.3, Thm 1.7 in \cite[Ch.XI.1]{revuz2013continuous}, therein only given for constant  $\delta$, in which case the ODE can be written as  $\phi'' =2 \lambda\mu \phi , \phi(t) = \exp (-\int_t^T \psi(s)ds )$.)

We can be more specific when $\mu$ is explicit. For instance, the conditional
Laplace transform of $X_T$ is obtained by taking $\mu (s) d s = \delta_T (d
s),$ justified by an easy approximation argument (e.g. $\mu^n = \frac{1}{n}
1_{[T - 1 / n], T}$), in which case the ODE becomes $ - \dot{\psi} =  \psi^2$ with terminal data $\psi(T) = - 2 \lambda$,
with unique solution $\psi(t) = -2\lambda / ( 1 + 2\lambda (T-t))$. Specializing to 
constant Bessel dimension $\delta (r) \equiv \delta$, $t = 0$, and $X_0 = x$, we obtain
\beas
\ee{\exp \left( - \lambda\, X_T \right)}&=& \exp \left( - \lambda \,x / (1 + 2 \lambda T) - \tfrac{\delta}{2} \log (1 + 2
   \lambda T) \right) \\
   &=& (1 + 2 \lambda T)^{- \delta / 2} \exp (- \lambda x /
   (1 + 2 \lambda T)) , 
\eeas
in agreement with \cite[Ch.XI.1]{revuz2013continuous}. (For what it's worth, the controlled ODE structure of $\dot{\psi}$, with additive noise $W := \Gamma_2$, makes sense for any deterministic c\`adl\`ag path $W$, and allows to compute the transform of any ``rough'' integral, $\int_0^T X_s (\omega) d W_s := X_{0,T} W_{0,T} - \int_0^T W_s^- dX_s$, with final integral in It\^o sense.)

\begin{lemma} \label{lem:cool}
The general even/odd pair in the $\mG$-expansion is of the form
\[ \mG_t^n = \frac{(- \lambda)^{n / 2}}{2}  \left( Z_t \Gamma_n (t) + \int_t^T
 \delta (r) \Gamma_n (r) d r \right), \quad \mG^{n + 1} \equiv 0, \]
with   
$\Gamma_n (t)$ determined by $\Gamma_2 (t)$=2$\int_t^T \mu (s) d s$ and
the recursion, for even $n \ge 4$,
\begin{equation} \label{GnODE}
- \dot{\Gamma}_n = \Gamma_2 \Gamma_{n - 2} + \Gamma_4 \Gamma_{n - 4} +
 \cdots + \Gamma_{n - 2} \Gamma_2, \quad \Gamma_n (T) = 0. \end{equation}
\end{lemma}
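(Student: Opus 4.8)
The plan is to establish both claims at once by induction on the even index $n$, feeding on the $\mG$-recursion (\ref{eq:Grec}) in the case $z=(z_1,z_2)=(0,-\lambda)$, for which $\mG^2=-\lambda\,(Y\dm Y)_t(T)$ and, for $k>2$, $\mG^k=\tfrac12\sum_{j=2}^{k-2}\mG^{k-j}\dm\mG^j$ (the term $z_1 Y\dm\mG^{k-1}$ drops since $z_1=0$). Write $X$ for the square Bessel process itself (denoted $Z$ in the statement). The three inputs I would use throughout are: $d\langle Y\rangle_s=\mu_s X_s\,ds$; the dynamics $dX_s=2\sqrt{X_s}\,dB_s+\delta(s)\,ds$; and the affine conditional mean $\E_t X_s=X_t+\int_t^s\delta(r)\,dr$ for $s\ge t$. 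Well-definedness of all diamond products below is not an issue, since a square Bessel process with bounded measurable dimension has moments of every order, so Theorem~\ref{thm:mainintroG}(i) applies.

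For the base case $n=2$ I would compute $(Y\dm Y)_t(T)=\E_t\langle Y\rangle_{t,T}=\int_t^T\mu_s\,\E_t X_s\,ds$, insert the affine mean, and use Fubini to turn $\int_t^T\!\int_t^s\delta(r)\,dr\,\mu_s\,ds$ into $\int_t^T\delta(r)\bigl(\int_r^T\mu_s\,ds\bigr)dr$; with $\Gamma_2(t):=2\int_t^T\mu_s\,ds$ this gives $\mG^2=-\tfrac{\lambda}{2}\bigl(X_t\Gamma_2(t)+\int_t^T\delta(r)\Gamma_2(r)\,dr\bigr)$, while $\mG^3=0$ because its defining sum is empty.

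For the inductive step I would assume the asserted form for every even $m<n$ (with $\Gamma_m$ solving (\ref{GnODE})) together with $\mG^{m+1}\equiv 0$. The crucial observation is that in $\mG^m$ the term $\int_t^T\delta(r)\Gamma_m(r)\,dr$ is absolutely continuous in $t$, so the martingale part of $\mG^m$ is $(-\lambda)^{m/2}\Gamma_m(t)\sqrt{X_t}\,dB_t$; consequently, for even $j$ with $2\le j\le n-2$, $d\langle\mG^{n-j},\mG^j\rangle_t=(-\lambda)^{n/2}\Gamma_{n-j}(t)\Gamma_j(t)X_t\,dt$, whence $(\mG^{n-j}\dm\mG^j)_t(T)=(-\lambda)^{n/2}\int_t^T\Gamma_{n-j}(s)\Gamma_j(s)\,\E_t X_s\,ds$. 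Splitting $\E_t X_s$ and applying Fubini to the drift contribution exactly as in the base case, and forming $\tfrac12\sum_j$ over even $j\in\{2,\dots,n-2\}$ (odd terms vanish by hypothesis), one reads off
\[ \mG^n=\frac{(-\lambda)^{n/2}}{2}\Bigl(X_t\,\Gamma_n(t)+\int_t^T\delta(r)\,\Gamma_n(r)\,dr\Bigr),\qquad \Gamma_n(t):=\int_t^T\sum_{\substack{2\le j\le n-2\\ j\text{ even}}}\Gamma_{n-j}(s)\,\Gamma_j(s)\,ds, \]
and this $\Gamma_n$ satisfies $\Gamma_n(T)=0$ and $-\dot\Gamma_n=\sum_j\Gamma_{n-j}\Gamma_j$, i.e.\ exactly (\ref{GnODE}). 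Finally $\mG^{n+1}=\tfrac12\sum_{j=2}^{n-1}\mG^{n+1-j}\dm\mG^j$, and since $n+1$ is odd each summand carries a factor of odd index at most $n-1$, hence vanishes by the induction hypothesis; so $\mG^{n+1}\equiv 0$, closing the induction.

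The hard part is not any single step but the careful bookkeeping in the inductive step: one must verify that the $\delta$-dependent summand of $\mG^m$ is of bounded variation in $t$ (so it disappears from the covariation, hence from the diamond product) and track the constants so that the two factors $2\sqrt{X_t}$ coming from $dX$ combine with the two $\tfrac12$'s to leave precisely $X_t$ and $(-\lambda)^{n/2}$. Once the martingale parts and covariations are pinned down, the remainder is the by-now familiar manipulation (also used for L\'evy area and the Cameron--Martin example) of recognising the nested time integrals as the integrated form of the quadratic recursion (\ref{GnODE}).
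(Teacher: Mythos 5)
Your proposal is correct and follows essentially the same route as the paper: induction on the even/odd pairs, identifying the martingale part of $\mG^m$ as $(-\lambda)^{m/2}\Gamma_m(t)\sqrt{X_t}\,dB_t$ (the $\delta$-term being BV), computing the covariations in the diamond products, using $\E_t X_s = X_t + \int_t^s\delta(r)\,dr$ with Fubini, and reading off $\Gamma_n(t)=\int_t^T\sum\Gamma_{n-j}(s)\Gamma_j(s)\,ds$, i.e.\ the integrated form of (\ref{GnODE}). Your write-up merely spells out the base case and the bookkeeping that the paper's proof leaves as "immediate."
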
   
\begin{proof}
The statement was seen to be correct for $n = 2$. Assume by induction that
it holds true, for all even/odd pairs up to $(n - 2, n - 1)$. In
particular then $d \mG^k = (- \lambda)^{k / 2} \Gamma_k \sqrt{Z} d B + d
(\mathrm{BV})$, for even $k <n$, and by the $\mG$ recursion, with sums below always over even
integers $j, k \ge 2$, 
    \[ 2 \mG^n = \sum_{j + k = n} \mG^j \diamond \mG^k = \sum_{j + k = n} \E_t
   \int_t^T d \langle \mG^j, \mG^k \rangle = (- \lambda)^{n / 2} \sum_{j + k =
   n} \E_t \int_t^T \Gamma_j (s) \Gamma_k (s) Z_s d s. \]
Set $\Gamma_n (t) \assign \sum_{j + k = n} \int_t^T \Gamma_j (s) \Gamma_k
(s) d s$ ($j, k, n$ even) and use $\E_t Z_s = Z_t + \int_t^s \delta (r) dr$ to conclude.($\mG^{n + 1} = 0$ is clear.)
The ODE statement is also immediate. 
\end{proof}

\subsection{A Markovian perspective and smooth KPZ} \label{sec:KPZ}

The previously encountered trees $\Big\{ \A, \AAd, \AAdAd , \AAdAAdd, \AAdAdAd \Big\}$ from Section \ref{sec:trees} were famously used in \cite{hairer2013solving} as a  minimal choice in indexing a {\it finite} expansion of the $(1+1)$-dimensional KPZ equation, with additional
analytical (rough path) arguments to deal with the remainder.\footnote{See also \cite{Gubinelli2016} and and \cite[Ch.15]{friz2014course} for similar trees in the KPZ context.} 
 The appearance of the same trees is more than a coincidence, as we shall now see. 
Consider functions $h_T = h_T (x)$ and $\xi = \xi (t, x)$ on $\R^d$ and $[0,T] \times \R^d$ respectively, for simplicity taken 
bounded with bounded derivatives of all orders, and consider
\begin{equation} \label{ATMarkov} A_T := h_T (B_T) + \int_0^T \xi (s, B_s) d s \end{equation}
with a standard $d$-dimensional Brownian motion $B$. Then
\[ \E_t e^{\lambda A_T (\omega) - \lambda \int_0^t \xi (s, B_s) d s} =
   \E_t e^{\lambda \left\{ h_T (B_T) + \int_t^T \xi (s, B_s) d s \right\}} =:
   e^{\lambda h (t, B_t)} =: z (t, B_t) \]
and $z = z (t, x)$ satisfies the Kolmogorov backward equation $- \dot{z} = \Delta z +
\lambda z \xi$, with terminal data $e^{\lambda h_T}$. By Cole-Hopf we obtain the $(d+1$)-dimensional KPZ equation
\begin{equation} \label{smoothKPZ}  \left( - \partial_t - \tfrac{1}{2} \Delta \right) h = \frac{\lambda}{2}
   (\nabla h \cdot \nabla h) + \xi, \quad h (T, \cdot) = h_T, 
\end{equation}
with smooth noise $\xi = \xi (t,x)$ and written in backward form. 
Following Hairer \cite{hairer2013solving}, who attributes such expansions to Wild (1955),
one has the (formal) tree indexed expansion\footnote{We use $|\lambda|$ to denote the number of leaves, which differs by $1$ from the number of inner nodes
which is the counting convention used in \cite[Equ (2.3)]{hairer2013solving}.} 
\begin{equation} \label{equ:KPZWildmartin}
h = u^{\A} + \lambda u^{\AAd} + 2 \lambda^2 u^{ \AAdAd} + \lambda^3 u^{\AAdAAdd} + 4 \lambda^3 u^{\AAdAdAd} \ \ + \dots  \ = \sum_{\tau} \lambda^{| \tau | - 1} u^{\tau} 
\end{equation}
with sum over all binary trees with $|\lambda| \ge 1$ leaves. 
More specifically, $u^{\A}$ is the unique (bounded) solution to the linear
problem $(\lambda = 0)$, and then, whenever $\tau = [\tau_1, \tau_2]$, the
root joining of trees $\tau_1$ and $\tau_2$, we get $u^{[\tau_1,\tau_2]} = u^{[\tau_2,\tau_1]}$ from\footnote{Cf. Remark \ref{rmk:sym} for related combinatorial comments, including symmetry factors.}
\begin{equation} \left( - \partial_t - \tfrac{1}{2} \Delta \right) u^{\tau} = \frac{1}{2}
   (\nabla u^{\tau_1} \cdot \nabla u^{\tau_2}), \qquad 2 u^{\tau} =  
   \mathcal{K}  \star (\nabla u^{\tau_1} \cdot \nabla u^{\tau_2}) =: 
   u^{\tau_1} \diamond u^{\tau_2}, \label{defDetDiamond}
  \end{equation}
where $\mathcal{K} \star (\ldots)$ denotes space-time convolution with the heat kernel.
(Thanks to our strong assumptions on forcing $\xi$ and terminal data $h_T$,
the recursion for the $u^{\tau} = u^{\tau} (t, x)$ is well-defined and all $u^\tau$ 
smooth.) We can then rewrite (\ref{equ:KPZWildmartin}) as
\[ \log \E^{t, x} e^{\lambda A_T (\omega)} = \lambda h (t, x) = \sum_{|
   \tau | \geqslant 1} \lambda^{| \tau |} u^{\tau} = \sum_{n \geqslant 1}
   \lambda^n \sum_{\tau : | \tau | = n} u^{\tau} = : \sum_{n \geqslant 1}
   \lambda^n K^n (t, x ; T) . \]
with $K^1 = u^{\A}$ and then recursively
\begin{equation} \label{normKrec} K^{n + 1} = \sum_{\tau : | \tau | = n + 1} u^{\tau} = \sum_{\cdots}
   u^{[\tau_1, \tau_2]} = \frac{1}{2} \sum_{\cdots} u^{\tau_1} \diamond
   u^{\tau_2} = \frac{1}{2} \sum_{i = 1^{}}^n K^i
   \diamond K^{n + 1 - i} , 
   \end{equation}
   using the unique decomposition of a binary tree $\tau$ with $| \tau | = n + 1 \ge 2$ \ leaves into smaller trees $\tau_1, \tau_2$ with $i$ (resp. $n+1 -i$) leaves for some $i=1,...,n$. 
By the Markov property, $\lambda^{- 1} \log \E_t e^{\lambda A_T (\omega)}
= h (t, B_t) \backassign \bar{h}_{},$ where $\overline{(\ldots)}$ indicates
composition of a function with time-space Brownian motion $\bar{B}_t = (t, B_t)$. Then the
$\bar{u}^{\tau_1}$ are semimartingales and by It\^o calculus 
\[ (\bar{u}^{\tau_1} \diamond \bar{u}^{\tau_2})_t (T) =\E_t \langle
   \bar{u}^{\tau_1}, \bar{u}^{\tau_2} \rangle_{t, T} =\E_t \left(
   \left( \int_t^T \nabla u^{\tau_1} \cdot \nabla u^{\tau_2} \right) (s, B_s)
   d s \right) =
   \bar{u}_t^{[\tau_1, \tau_2]} = (u^{\tau_1} \diamond u^{\tau_2}) \circ \bar{B}_t \]
which could be expressed as a commutative diagram. (Note that respective diamonds used on the left and right are different, introduced in Definition \ref{def:diamond} and  (\ref{defDetDiamond}) respectively.)
It
follows from (\ref{normKrec}) that $\bar{K}^n \assign K^n \circ \bar{B}$ satisfies the same diamond
recursion (\ref{eq:Krec}) as $\mathbb{K}^n$. The --- in view of (\ref{equ:KPZWildmartin}) still formal ---
conclusion
\[ \log \E_t e^{\lambda A_T (\omega)} = \lambda \int_0^t \xi (s, B_s) d s
   + \lambda u^{\A} (t, B) + \sum_{n \geqslant 2} \lambda^n \bar{K}^n (t,
   x ; T) \]
is then in exact agreement with the $\mathbb{K}$ expansion, since
\[ \mathbb{K}^1_t (T) \assign \E_t (A_T (\omega)) =\E_t \left(
   h_T (B_T) + \int_0^T \xi (s, B_s) d s \right) = u^{\A} (t, B_t) +
   \int_0^t \xi (s, B_s) d s \]
and subsequent terms in the recursion are not affected by the final BV term. Theorem \ref{thm:mainintroK} now settles convergence of (\ref{equ:KPZWildmartin}), with the additional advantage of removing the stringent conditions on the data: exponential moments for terminal data $h_T(B_T)$ and integrated forcing $\int \xi_0 ^T(s,B)s) ds)$ are enough.  We summarize this discussion as

\begin{theorem} \label{thm:KPZstuff}For $\lambda$ small enough, the perturbative expansion 
for the KPZ equation (\ref{smoothKPZ})
\begin{equation}  \label{eq:ConvCGFKPZ}
\lambda^{-1} \log \E^{t,x} e^{\lambda \left\{ h_T (B_T) + \int_t^T \xi (s, B_s) d s \right\}} = h(t,x) 
= \sum_{\tau}  \lambda^{|\tau|-1} u^{\tau} (t,x) 
  \end{equation} 
converges. Moreover, terms of same homogeneity have the stochastic representation 
$$
         \sum_{\tau : | \tau | = n} u^{\tau} (t,x) = \E \Big( \mK^n_t(T)  \, | \, B_t = x \Big)\;, 
$$ 
where $\mK^n_t ( T)$ follows (\ref{eq:Krec}), $n>1$, and started with $t$-conditional expectation of $$\mK^1_T (T) = A_T = h_T (B_T) + \int_0^T \xi (s, B_s) d s \;.$$  
\end{theorem}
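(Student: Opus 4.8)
The plan is to derive the statement from Theorem~\ref{thm:mainintroK} together with the Markovian dictionary already set up in the discussion preceding the theorem. First I would note that, under the standing assumptions ($h_T,\xi$ bounded with bounded derivatives of all orders, $T<\infty$), the random variable $A_T=h_T(B_T)+\int_0^T\xi(s,B_s)\,ds$ is bounded, hence has exponential moments of every order; consequently Theorem~\ref{thm:mainintroK}(ii) applies and gives, for $|\lambda|$ below an a.s.\ positive radius, the absolutely convergent conditional cumulant expansion $\log\E_t e^{\lambda A_T}=\sum_{n\ge1}\lambda^n\mK^n_t(T)$, the $\mK^n$ being produced by the diamond recursion (\ref{eq:Krec}) from $\mK^1_t(T)=\E_t A_T=u^{\A}(t,B_t)+\int_0^t\xi(s,B_s)\,ds$ (the last equality by Feynman--Kac for the linear backward equation defining $u^{\A}$). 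Since the bounded-variation term $\int_0^t\xi(s,B_s)ds$ enters only $\mK^1$ and the diamond product sees its arguments only through their martingale parts (quadratic covariation), all $\mK^n$ with $n\ge2$ are unchanged if one replaces $A_T$ by $\tilde A:=h_T(B_T)+\int_t^T\xi(s,B_s)ds$; I will carry this replacement along, so that $\mK^1_t(T)=u^{\A}(t,B_t)$ and the whole task reduces to identifying each $\mK^n_t(T)$ with the deterministic tree function $K^n(\cdot,\cdot;T):=\sum_{|\tau|=n}u^\tau$ evaluated at $(t,B_t)$.

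I would prove $\mK^n_t(T)=K^n(t,B_t;T)$ by induction on $n$. For $n=1$ this is the Feynman--Kac identity just recalled, together with $K^1=u^{\A}$. For $n=2$: It\^o's formula applied to the (smooth, bounded) $u^{\A}$, which solves $(-\partial_t-\tfrac12\Delta)u^{\A}=\xi$, gives $d\big(u^{\A}(t,B_t)+\int_0^t\xi(s,B_s)ds\big)=\nabla u^{\A}(t,B_t)\cdot dB_t$, so the martingale part of $\mK^1$ is $M=\int_0^\bullet\nabla u^{\A}(s,B_s)\cdot dB_s$ and $\langle M\rangle_{t,T}=\int_t^T|\nabla u^{\A}|^2(s,B_s)ds$; hence $2\mK^2_t(T)=(\mK^1\dm\mK^1)_t(T)=\E_t\int_t^T|\nabla u^{\A}|^2(s,B_s)ds=\big(\mathcal{K}\star|\nabla u^{\A}|^2\big)(t,B_t)=2u^{[\A,\A]}(t,B_t)=2K^2(t,B_t;T)$, using the Feynman--Kac reading of the heat-kernel convolution in (\ref{defDetDiamond}). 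For the inductive step I invoke the compatibility $(\bar u^{\tau_1}\dm\bar u^{\tau_2})_t(T)=(u^{\tau_1}\diamond u^{\tau_2})(t,B_t)$ established before the theorem (again It\^o for the semimartingales $\bar u^{\tau_i}=u^{\tau_i}\circ\bar B$, then Feynman--Kac): combined with bilinearity of $\dm$ and the induction hypothesis, it turns the recursion (\ref{eq:Krec}) for $\mK^{n+1}$ into the recursion (\ref{normKrec}) for $K^{n+1}$ read along $(t,B_t)$, whence $\mK^{n+1}_t(T)=K^{n+1}(t,B_t;T)$. (The $k=1$ and $k=n$ terms of (\ref{eq:Krec}) pair $\mK^1$ with $\mK^n$; any bounded-variation part of $\mK^1$ again drops out of the diamond.) Since $K^n(t,B_t;T)$ is already $\sigma(B_t)$-measurable, taking $\E(\,\cdot\mid B_t=x)$ yields exactly $\sum_{|\tau|=n}u^\tau(t,x)=\E(\mK^n_t(T)\mid B_t=x)$.

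Finally, for convergence of the (deterministic) tree series I substitute the representation back into the expansion from Theorem~\ref{thm:mainintroK}(ii). By the Markov property $\E_t e^{\lambda\tilde A}$ is a function of $B_t$, namely $e^{\lambda h(t,B_t)}$; so for $|\lambda|$ small the identity $\log\E_t e^{\lambda\tilde A}=\sum_{n\ge1}\lambda^n\mK^n_t(T)=\sum_{n\ge1}\lambda^n K^n(t,B_t;T)$ holds a.s., with absolutely convergent sum. Evaluating at $B_t=x$ (legitimate because, up to null sets, both sides are deterministic functions of $(t,B_t)$, $B_t$ has full support, and $x\mapsto K^n(t,x;T)$ is continuous, so the identity and the absolute-convergence bound persist for every $x$) gives $\lambda h(t,x)=\sum_{n\ge1}\lambda^n K^n(t,x;T)=\sum_\tau\lambda^{|\tau|}u^\tau(t,x)$, i.e.\ (\ref{eq:ConvCGFKPZ}) after dividing by $\lambda$; the radius is deterministic because, by the same Markov reasoning, the random radius in Theorem~\ref{thm:mainintroK}(ii) is itself a function of $B_t$, and it is bounded below uniformly in $(t,x)$ by the uniform bound $\|h_T\|_\infty+T\|\xi\|_\infty$ on $\tilde A$. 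The main obstacle I anticipate is purely bookkeeping: keeping precise track of the bounded-variation term $\int_0^t\xi(s,B_s)ds$ (it perturbs $\mK^1$ only, never propagates through a diamond, and must be peeled off correctly when translating between the $\cF_t$-conditional statement of Theorem~\ref{thm:mainintroK} and the $B_t$-conditional --- hence deterministic-in-$(t,x)$ --- statement here), and, relatedly, justifying the passage from the a.s.\ random identity to a pointwise-in-$x$ deterministic one; this is where the smoothness/boundedness hypotheses on $h_T,\xi$ (ensuring all $u^\tau$ are smooth and uniformly bounded, so that the $u^\tau$-recursion is well posed and all interchanges of expectation and summation are licit) are used.
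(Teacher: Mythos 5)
Your proof is correct and follows essentially the same route as the paper: identify $\mK^n_t(T)$ with $\sum_{|\tau|=n}u^\tau(t,B_t)$ by induction, using the It\^o/Feynman--Kac compatibility $(\bar u^{\tau_1}\dm\bar u^{\tau_2})_t(T)=(u^{\tau_1}\diamond u^{\tau_2})(t,B_t)$ of the two diamond products and the matching quadratic recursions (\ref{eq:Krec}) and (\ref{normKrec}), then invoke Theorem \ref{thm:mainintroK}(ii) since the bounded $A_T$ has exponential moments. The extra care you take with the bounded-variation term $\int_0^t\xi(s,B_s)\,ds$, the pointwise-in-$x$ evaluation, and the uniform lower bound on the convergence radius only makes explicit what the paper leaves implicit.
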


\begin{remark} Theorem \ref{thm:KPZstuff} is really a Markovian perspective on the cumulant recursion and the above argument is readily repeated when Brownian motion (with generator $\Delta/2$) is replaced by a generic diffusion process (resp. its generator), in which case $(\nabla u^{\tau_1} \cdot \nabla u^{\tau_2}) /2$ in (\ref{defDetDiamond}) must be replaced by the corresponding {\it carr\'e du champ} $\Gamma (u^{\tau_1},u^{\tau_2})$, cf. \cite[Prop. VIII.3.3]{revuz2013continuous}. Sufficient conditions for the recursion (\ref{defDetDiamond}) to be well-defined , so that $u^\tau \in C^{1,2}$ for all $\tau$, hence $\bar u^\tau$ semimartingales, are a delicate issue.
The martingale based diamond expansion bypasses this issue entirely, with $\bar u^\tau$ as part of $\mK^{|\tau|}$ constructed directly, and so applies immediately 
when $B$ in (\ref{eq:ConvCGFKPZ}) is replaced by a generic diffusion processes on $\R^d$.
\end{remark}

\subsection{Cumulants on Wiener-It\^o chaos}  \label{sec:WScum}

On the classical Wiener space $C([0,T],\R)$, with Brownian motion $B(\omega,t) = \omega_t$, consider
an arbitrary element in the second Wiener It\^o chaos, written in the form
\[ A_T := I_2 (f) := \int_0^T \int_0^v f (w, v) d B_w d B_v \;, \]
with $f=f_A \in L^2$ on the simplex $\Delta_T = \{ (s,t): 0 \le s \le t \le T \}$. Note martingality $A_t := \mathbb{E}_t A_T$ so that $ \E_t A_{t, T} = \E_t A_T - A_t = 0$.
Then
\[ A_{t, T} = \int_t^T \int_0^v f (w, v) d B_w d B_v =
\int_t^T \int_t^v f (w, v) d B_w d B_v + \int_t^T \int_0^t f (w, v) d B_w d
B_v \]
and
\[ 
\langle A \rangle_{t, T} = \int_t^T \left( \int_0^v f (w, v) d B_w
\right)^2 d v = \int_t^T \left( \int_0^t f (w, v) d B_w + \int_{t}^v
(\ldots .) \right)^2\,dv,
\]
so that
\[ (A \diamond A)_t (T) = \mathbb{E}_t\langle A \rangle_{t, T} = \int_t^T \left(
\int_0^t f (w, v) d B_w \right)^2 d v + 0 + \int_t^T \int_{t}^v f^2 (w,
v) d w d v. \]
We have thus computed $\mK_t^2 (T)=\tfrac12 (A \diamond A)_t (T) .$ 
By
polarization, for $A = I_2 (f_A), C = I_2 (f_C),$
\[ (A \diamond C)_t (T) = \int_t^T \left( \int_0^t f_A (r, v) d B_r \right)
\left( \int_0^t f_C (r, v) d B_r \right) d v + \int_t^T \int_{t}^v f_A
(w, v) f_C (w, v) d w d v. \]
To go further, we exhibit the martingale part of $A \diamond C$ by writing
\[ \int_0^T \left( \int_0^t f_A (r, v) d B_r \right) \left( \int_0^t f_C (r,
v) d B_r \right) d v - \int_0^t (\ldots .) d v + \int_t^T \int_{t}^v f_A
(w, v) f_C (w, v) d w d v. \]
From the product rule, with $ {BV}_t = \int_0^t f_A (r, v) f_C (r, v) d
r$, we have
\[ \left( \int_0^t f_A (r, v) d B_r \right) \left( \int_0^t f_C (r, v) d B_r
\right) = \int_0^t \int_0^s [f_A (r, v) f_C (s, v) + f_C (r, v) f_A (s, v)]
d B_r d B_s +  {BV}_t \]
Letting $\otimes_1$ indicate integration in one (the right-sided) variable and tilde
symmetrisation,
\[ f_A  \tilde{\otimes}_1 f_C := \int_0^T ([f_A (r, v) f_C (s, v) + f_C
(r, v) f_A (s, v)]) d v \]
so that
\[ (A \diamond C)_t (T) = \int_0^t \int_0^s (f_A  \tilde{\otimes}_1 f_C) d
B_r d B_s +  {BV}_t (T) \]
with $$(A \diamond C)_0 (T) =  {BV}_0 (T) = \int_0^T \int_{0}^v f_A
(r, v) f_C (r, v) d r d v = \langle f_A, f_C \rangle_{\Delta_T}
=: f_A \otimes_2 f_C.
$$
In particular, we see that from (\ref{ex:cum1to4}) that the third cumulant of $A_T = I_2 (f_A)$ is given by
\[ \kappa_3 (A_T) = \kappa_3 (I_2 (f_A)) =3 (A \diamond (A \diamond A))_0 (T) = \langle f_A, f_{A \diamond A}
\rangle = \langle f_A, (f_A  \tilde{\otimes}_1 f_A) \rangle. 
\]
Theorem \ref{thm:mainintroK} then provides, in the present setting, an alternative to the (Malliavin calculus based) approach of Nourdin--Peccati \cite{nourdin2010cumulants}: by (5.22) in that paper, the $n$.th cumulant of $I_2 (f)$ is given by some explicit formula which reduces to (in case $n=3$) our formula.
It is not difficult to push this ``diamond'' computation to recover cumulants for general integer $n$. The diamond approach of course works just as well for higher Wiener-It\^o chaos and $d$-dimensional Wiener space, as was already seen in Section \ref{sec:DPISI}. Note however that the exponential integrability assumed in part (ii) of Theorem \ref{thm:mainintroK}, valid in the second chaos, does not hold for third and higher chaos. However, any fixed chaos random variable has moments of all orders so that part (i) of this theorem is applicable. 
Last not least, note that \cite{nourdin2010cumulants} deal with Gaussian fields, whereas we have been dealing with processes.

\subsection{Stochastic volatility} \label{sec:SV}

\subsubsection{Joint law of SPX, realized variance and VIX squared}

We return to the financial mathematics context that originally gave rise to diamond expansions result. Our framework permits the valuation and hedging of complex derivatives involving combinations of assets and their quadratic variations. To be specific, let $S$ be a strictly positive continuous martingale. Then $X := \log S$ is a semimartingale, with $e^X_T \in L^1$, so that $X_T$ has moments of all orders. 
If the quadratic variation process $\angl{X}$ is absolutely continuous, the  {\it stochastic variance} and {\it forward variance} are given by
$$
v_t := d \angl{X}_t/ dt \;, \quad \xi_t(T) = \eef{v_T} \;.
$$
Upon integration in time quantities these quantities - realized and expected quadratic variation at a future time $T$ - constitute the payoff of a variance swap and VIX$^2$ respectively. (This application entails the interpretation of $e^X$ as the risk neutral price process of the SPX index on which the VIX index is built.) We now illustrate the use of Theorem \ref{thm:mainintroG} to determine the joint law of (log)-price, realized and expected quadratic variation at a future time $T$, the precise setting for consistent pricing of options on SPX, realized variance and VIX squared, with time-$T$ payoff given by
\[
\zeta_t(T) =  \int_T^{T+\Delta}\,\xi_t(u)\,du= \eet{\int_T^{T+\Delta}\,v_u\,du} = \E_t \angl{X}_{T,T+\Delta} .
\]

\begin{theorem} \label{thm:TripleJointMGF} Assume $X_T$ has exponential moments. Then for $a, b, c \in \RR$ sufficiently small,
\begin{equation} \label{generalCF}
\eef{e^{a\,X_T+ b\,\angl{X}_{T} +c\,\zeta_T(T)}} 
=\exp\left\{a\, X_t + b\,\angl{X}_{t} +  c\,\zeta_t(T)+\sum_{k=2}^\infty\,\mG^k_t (T;a,b,c)  \right\} \;,
\end{equation}
where the $\mG^k$'s are given recursively by (\ref{eq:Grec}),  starting with 
$$
           \mG^2 = \left( \tfrac 12 a(a-1)  + b \right) X \dm X + a c \,X \dm \zeta + \tfrac{1}{2}c^2\,\zeta \dm \zeta \ .
$$ 
\end{theorem}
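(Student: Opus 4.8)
The plan is to deduce Theorem \ref{thm:TripleJointMGF} as a direct instance of the multivariate $\mG$-expansion, Theorem \ref{thm:mainintroG}(ii)--(iii), applied to a suitably chosen triple of martingales. The first step is to identify the right ``linear data'': with $X_T$ having exponential moments, set $A^1_T := X_T$ and observe that $X$ itself is \emph{not} a martingale (only $e^X$ is), so I would instead feed the construction the martingale part. Concretely, I would take the martingale $M := X - X_0 + \tfrac12 \angl{X}$ associated to $e^X$, or more directly work with the three martingales $Y^1 := \mathbb{E}_\bullet X_T$, $Y^2 := \mathbb{E}_\bullet \angl{X}_T$ (whose increment over $[t,T]$ is the diamond $X\dm X$ plus a BV piece), and $Y^3 := \mathbb{E}_\bullet \zeta_T(T) = \mathbb{E}_\bullet \angl{X}_{T,T+\Delta}$; then $\zeta_t(T)$ is exactly $Y^3_t$ and is itself a martingale in $t$. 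The subtlety is that $a X_T + b\angl{X}_T + c\zeta_T(T)$ must be rewritten so that the quadratic-variation contributions are bundled as in the $z_2\angl{Y}_T$ slot of Theorem \ref{thm:mainintroG}, using that $\angl{X}_T$ appears both explicitly (coefficient $b$) and implicitly through the $-\tfrac12\angl{X}$ drift correction hidden in the relation between $X$ and its exponential martingale.

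The second step is the bookkeeping that produces the stated $\mG^2$. Writing $Z(\eps) := \eps(aX + b\angl{X} + c\zeta) = \eps a X + \eps b \angl{X} + \eps c \zeta$ and running the stochastic-logarithm identity (\ref{key-equ}) exactly as in the proof of Theorem \ref{thm:mainintroG}, the order-$\eps^2$ term is $\mathbb{E}_t Z_{t,T}$ (which is BV-only, contributing the explicit $b\angl{X}_t, c\zeta_t$ shifts) plus $\tfrac12 \angl{Z}_{t,T}$ diamonded. The quadratic variation of $Z$ is, to leading order, $\eps^2 a^2 \angl{X} + 2\eps^2 ac\angl{X,\zeta} + \eps^2 c^2\angl{\zeta} + O(\eps^3)$; the crucial point is that the ``$-1$'' in $\tfrac12 a(a-1)$ comes precisely from $e^X$ being the martingale, i.e. from $\mathbb{E}_t e^{aX_T} = \mathbb{E}_t e^{a X_T}$ where $X = \log e^X$ carries the It\^o correction — this is the same mechanism by which the $\mF$-recursion arises from $\mG$ under $z = (z_1, -z_1/2)$, now localized to the $X$-block. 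Collecting: $\mG^2 = (\tfrac12 a(a-1) + b)\,X\dm X + ac\,X\dm\zeta + \tfrac12 c^2\,\zeta\dm\zeta$, matching the claim, and the higher $\mG^k$ obey (\ref{eq:Grec}) with $z_1 Y$ replaced by the vector $(a\,X^{\mathrm{mart}}, \text{no genuine } b\text{-leaf}, c\,\zeta)$ contraction.

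The third step is convergence and the smallness of $(a,b,c)$: since $X_T$ has exponential moments, so do $\angl{X}_T$ and $\angl{X}_{T,T+\Delta}$ by the standard Novikov-type argument already invoked in the proof of Theorem \ref{thm:mainintroG}, hence part (ii) of that theorem applies verbatim to the $3$-variate setting via part (iii), giving an a.s. positive radius $\rho_t(\omega)$ and absolute convergence of $\sum_{k\ge2}\mG^k_t(T;a,b,c)$ for $(a,b,c)$ real and small, with the equality (\ref{generalCF}) holding on that domain. The main obstacle I anticipate is purely organizational rather than deep: correctly disentangling which occurrences of $\angl{X}$ are ``explicit $b$-leaves'' (entering only the order-$\eps^2$ BV shift and the $b$ in $\mG^2$) versus the It\^o-correction term implicit in working with $X = \log e^X$ (entering the $-\tfrac12 a$ in $\tfrac12 a(a-1)$), so that the single quadratic form $\mG^2$ above genuinely reproduces what the mechanical $\eps$-matching yields; once that correspondence is pinned down, the rest is an instance of Theorem \ref{thm:mainintroG} and its multivariate version.
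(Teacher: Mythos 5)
Your proposal follows essentially the same route as the paper: re-express $a\,X_T+b\,\angl{X}_T+c\,\zeta_T(T)$ in terms of the continuous (local) martingale $Y$ with $X_T=Y_T-\tfrac12\angl{Y}_T$ (so the quadratic-variation slot carries coefficient $b-\tfrac12 a$, producing $\tfrac12 a^2+b-\tfrac12 a=\tfrac12 a(a-1)+b$), then invoke the multivariate $\mG$-expansion of Theorem \ref{thm:mainintroG}(ii)--(iii) for the pair $(Y,\zeta)$, using $\angl{Y}=\angl{X}$ and that $Y-X$ is BV so $Y\dm Y=X\dm X$, $Y\dm\zeta=X\dm\zeta$. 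Your hesitation between parametrizations and the $\eps$ versus $\eps^2$ grading of the quadratic-variation slot is only cosmetic, since you ultimately bundle the terms into the $z_2$-slot exactly as the paper does, and your integrability step (exponential moments inherited by $\angl{X}$ and $\zeta$, then Theorem \ref{thm:mainintroG}(ii)) is the same as the paper's.
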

\begin{proof} This is a direct consequence of multivariate $\mG$-expansion of Theorem \ref{thm:mainintroG}, employed with time-$T$ terminal data, re-expressed in terms of the martingale $Y = X - \tfrac{1}{2} \angl{X} $,
$$
  a\,X_T+ b\,\angl{X}_{T}+c\,\zeta_T(T) = a\,Y_T+ \left(b-\tfrac12a\right) \angl{Y}_{T}+c\,\zeta_T(T)   \;.
$$
We note for later use that the convergent $\mG$-sum is exactly equal to $\Lambda$, which satisfies the ``abstract Riccati'' equation (\ref{key-equ}), 
\[
  \Lambda_{t}^T =   \mathbb{E}_t Z_{t,T} + \tfrac{1}{2} \big( \langle Z + \Lambda^T) \dm ( \langle Z + \Lambda^T) \big)_t(T) .
\]
Then, since $Y$ and $\zeta$ are martingales, and $\angl{Y}$ is $BV$ hence annihilated by $\dm$,
\beas
  \Lambda &=&\left(b-\tfrac12a\right) Y \dm Y+  \tfrac{1}{2} \big( \langle  a\,Y+c\,\zeta+ \Lambda) \dm ( \langle a\,Y+c\,\zeta + \Lambda) \big)\\
&=&  \left(\tfrac12 a^2+b-\tfrac12a\right) Y \dm Y + a c\,Y\dm \zeta + a\,Y\dm \Lambda+  \tfrac{1}{2} \big( \langle c\,\zeta+ \Lambda) \dm ( \langle c\,\zeta + \Lambda) \big)\\
  &=&  \left(\tfrac 12 a( a-1)+b\right) Y \dm Y + a c\,Y\dm \zeta + a\,Y\dm \Lambda + c\, \zeta \dm \Lambda +  \tfrac{1}{2} c^2\,\zeta\dm \zeta +   \tfrac{1}{2} \Lambda \dm \Lambda,
\eeas
which in terms of pictures with $Y=\A$ and $\zeta=\Z$ gives
\begin{equation}\label{eq:roughRic}
\Lambda  = \left(  \tfrac 12 a(a-1) + b\right) \AAd 
+ a c\,\AZd + a\,\A \dm \Lambda + c\, \Z \dm \Lambda +  \tfrac{1}{2} c^2\,\ZZd +   \tfrac{1}{2} \Lambda \dm \Lambda\;.
\end{equation} 
 

\end{proof}

\begin{remark} Martingality of $S=e^X$ is seen in (\ref{generalCF}) upon setting $(a,b,c)=(1,0,0)$. It is a feature of the $\mG$ expansion that  each term $\mG^k_t(T;1,0,0)$ vanishes so that the martingality constraint is preserved at arbitrary truncation of the $\mG$ expansion, reminiscent of (Lie algebra preserving) Magnus expansions for differential equations on Lie groups \cite{iserles1999solution}. This is not be the case for the multivariate $\mK$ expansion, cf. Remark \ref{rmk:EMart} for a related discussion.
\end{remark} 

\begin{remark} We insist that (\ref{generalCF}) is a model free result, with $\mG$-expansion given naturally by diamond trees with two typos of leaves corresponding to $X$ and $\zeta$. 
\end{remark} 

\subsubsection{Forward and affine forward variance models}

After Black--Scholes (constant volatility), classical stochastic volatility models consider $v=v(t,\omega)$ as stochastic process in its own right. ``Third generation models'' where one specifies directly forward variances - viewed as a family of martingales indexed by their individual time horizon -  
are nowadays ubiquitous in equity financial modeling. In full generality this reads
\begin{equation} \label{xigeneral}
d_t \xi_t(u) = d_t \eef{v_u} =:  \sigma_t{(u)}  \, dW_t^u \;, \qquad t \le u \;,
\end{equation}
where $v_t \equiv \xi_t (t)$ and $dS_t / S_t = \sqrt{v_t} dZ_t$, where the correlation (covariation) structure all the Brownian family $\{Z, W^T: T \ge 0 \}$ also needs to be specified. We can then immediately write (diamonds with $\zeta$ amount to average diamonds with $\xi(T')$ over $T' \in [T,T+\Delta]$)
\beas
\AAd \ \  &:=& (X \dm X)_t(T) = \eef{\int_t^T\,d\angl{X}_s} = \int_t^T\,\xi_t(s)\,ds\\
\AZd  \ \ &:=& (X \dm \xi(T'))_t(T) = \eet{\int_t^T\,d\angl{X,\xi(T')}_s}= \E_t \int_t^T \sqrt{v_s}\, \sigma_s(u)  d \langle Z, W^{T'} \rangle_s \\
\ZZd  \ \ &:=& (\xi(T')\dm \xi(T'))_t(T) = \eet{\int_t^T\,d\angl{\xi(T'),\xi(T')}_s}=\int_t^T\, \eet{ \sigma_s(T')^2} \, ds .
\eeas
At this stage, more structure is required for computations. 
A particularly simple choice is the affine specification $\sigma_t(u)  = \kappa(u-t)\,\sqrt{v_t}$ of \cite{gatheral2019affine}:
\bea
\frac{dS_t}{S_t}&=&\sqrt{v_t}\,dZ_t\nonumber\\
d_t \xi_t(u)&=&  \kappa(u-t)\,\sqrt{v_t}\,dW_t \;, \qquad t \le u \;,
\label{eq:AFV}
\eea
where $\kappa$ is some $L^2$-kernel and the Brownian drivers satisfy $d\angl{W,Z}_t/dt = \rho$. 
Note that $$ \xi_t(u) = \xi_0(u) + \int_0^t  \kappa(t-s)\,\sqrt{v_s}\,dW_s \;,\qquad v_t \equiv \xi_t(t) = \xi_0(t) + \int_0^t  \kappa(t-s)\,\sqrt{v_s}\,dW_s $$
so that stochastic variance solves a stochastic Volterra equation.
 Special cases are the Heston and rough Heston models with exponential and power-law kernels respectively. We also note that
\begin{equation} \label{eq:AFV2}
      d_t \zeta_t (T) = \left( \int_T^{T+\Delta}  \kappa(u-T) \,d u \right) \,\sqrt{v_t}\,dW_t =: \bar{\kappa}(T-t) \,\sqrt{v_t}\,dW_t
\end{equation}
has the same form as (\ref{eq:AFV}); in computations, $\zeta$ can then effectively be replaced by $\xi$. 

\begin{lemma}\label{lem:AffineTrees}

In the affine forward variance model (\ref{eq:AFV}) all diamond trees (with leaves of two types $X = \A$ and $\zeta = \Z\,$, respectively), and hence all forests terms
$\mG^k_t$ in (\ref{generalCF}) are of the form 
\beq
\int_t^T\,\xi_t(u)\,h(T-u)\,du
\label{eq:convolutionForm}
\eeq
for some integrable function $h$.
\end{lemma}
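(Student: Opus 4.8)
The plan is to run an induction over the structure of the diamond trees, the key invariant being that, in the affine model (\ref{eq:AFV}), the It\^o (martingale) part of every node in a diamond tree factorizes as a deterministic function of the remaining time $T-t$, times $\sqrt{v_t}$, times a \emph{single} Brownian motion. Concretely, let $\mathcal{C}$ be the class of semimartingales of the form $G_t = \int_t^T \xi_t(u)\, h(T-u)\, du$ with $h$ integrable on $[0,T]$. I will show that every diamond tree built from the leaves $X = \A$ and $\zeta = \Z$ lies in $\mathcal{C}$, and then that every $\mG^k_t$ in (\ref{generalCF}) does as well, being (by the recursion underlying (\ref{eq:roughRic})) a finite $\mathbb{R}$-linear combination, with coefficients monomial in $a,b,c$, of such diamond trees.

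First I would record the relevant differentials. For $G\in\mathcal{C}$, differentiating in $t$ produces a bounded-variation term from the lower limit ($-v_t\,h(T-t)\,dt$, using $\xi_t(t)=v_t$), while $d_t\xi_t(u)=\kappa(u-t)\sqrt{v_t}\,dW_t$ from (\ref{eq:AFV}) gives the martingale part
\[
 \Big(\int_t^T \kappa(u-t)\, h(T-u)\,du\Big)\sqrt{v_t}\,dW_t = (\kappa\star h)(T-t)\,\sqrt{v_t}\,dW_t,
\]
a time-reversed convolution; by Young's inequality $\kappa\star h\in L^2[0,T]$ since $\kappa\in L^2$. For the leaves, the martingale part of $X$ (equivalently of $Y=X-\tfrac12\angl{X}$, since $\angl{X}$ is $BV$) is $\sqrt{v_t}\,dZ_t$, and by (\ref{eq:AFV2}) that of $\zeta$ is $\bar\kappa(T-t)\sqrt{v_t}\,dW_t$ with $\bar\kappa$ bounded on $[0,T]$. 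Thus every node $A$ of a diamond tree has martingale part of the form $c_A(T-s)\sqrt{v_s}\,dU^A_s$ with $c_A\in L^2[0,T]$ and $U^A\in\{Z,W\}$.

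Next I would check that $\dm$ closes $\mathcal{C}$. Given two nodes $A,B$ as above, since $d\angl{Z}_s=d\angl{W}_s=ds$ and $d\angl{Z,W}_s=\rho\,ds$,
\[
 (A\dm B)_t(T) = \E_t\!\int_t^T \varrho\, c_A(T-s)\,c_B(T-s)\, v_s\,ds = \int_t^T \varrho\, c_A(T-s)\,c_B(T-s)\,\xi_t(s)\,ds,
\]
where $\varrho\in\{1,\rho\}$ according to whether $U^A=U^B$; here I use Fubini and the defining identity $\E_t v_s=\xi_t(s)$. Hence $A\dm B\in\mathcal{C}$ with $h=\varrho\,c_A c_B\in L^1[0,T]$ (a product of two $L^2$ functions). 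The three size-one trees $\AAd=X\dm X$, $\AZd=X\dm\zeta$, $\ZZd=\zeta\dm\zeta$ are exactly this computation applied to the leaf data, recovering $\AAd=\int_t^T\xi_t(u)\,du$ as in the text. A straightforward induction on the number of internal (root-joining) nodes of a diamond tree $\tau=\tau_1\dm\tau_2$ --- each $\tau_i$ a leaf or a smaller tree, hence in $\mathcal{C}$ and so with factorized martingale part --- then gives $\tau\in\mathcal{C}$; summing over the finitely many trees comprising each $\mG^k_t$ gives the stated convolution form.

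The main obstacle is purely the integrability bookkeeping: one must choose a function space for the kernels $h$ that is stable under both convolution with $\kappa$ (needed to pass from $G\in\mathcal{C}$ to its martingale coefficient) and pointwise product (needed to form $h=\varrho\,c_Ac_B$), so that the induction does not degrade the class. The pair ``$h\in L^1[0,T]$ for membership in $\mathcal{C}$, martingale coefficients in $L^2[0,T]$'' works, using Young's inequality ($L^2\star L^1\subset L^2$) and Cauchy--Schwarz ($L^2\cdot L^2\subset L^1$) together with the standing assumption $\kappa\in L^2$; one must also justify Fubini when moving $\E_t$ inside the $ds$-integral, which follows from the integrability of the $\mG$-terms already guaranteed by Theorem \ref{thm:mainintroG}. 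The structural point that every covariation is a deterministic multiple of $v_s\,ds$ is, by contrast, immediate from the affine form (\ref{eq:AFV}).
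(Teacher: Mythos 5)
Your proof is correct and takes essentially the same route as the paper: check the two-leaf trees directly from (\ref{eq:AFV})--(\ref{eq:AFV2}), then induct, using that in the affine model every covariation is a deterministic kernel times $v_s\,ds$ and that $\E_t v_s=\xi_t(s)$, so the diamond of two objects of convolution form is again of that form. Your node-level invariant (martingale part $=c(T-s)\sqrt{v_s}\,dU_s$ with $U\in\{Z,W\}$) is only a mild refinement, though a welcome one, since it uniformly covers the mixed products $X\dm\Lambda$ and $\zeta\dm\Lambda$ that the paper's tree-tree induction step leaves implicit, and your $L^1$/$L^2$ bookkeeping (Young plus Cauchy--Schwarz) makes the claimed integrability of $h$ explicit.
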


\begin{proof} As above, $\AAd = \int_t^T\,\xi_t(s)\,ds$, but now with (\ref{eq:AFV}), also noting (\ref{eq:AFV2}),
\beq
\AZd \ \ \  
=  \rho\,\int_t^T\,\xi_t(u)\,\bar \kappa(T-u)\,du \;, 
\qquad \ZZd \ \ \  
=  \int_t^T\,\xi_t(u)\,\bar \kappa(T-u)^2\,du \;.
\label{eq:AFV3}
\eeq
We thus see that the claim holds for all diamond trees with two leaves and proceed by induction.
Consider two trees 
\[
\mT^i_t =\int_t^T\,\xi_t(u)\,h^i(T-u)\,du, \qquad i = 1,2
\]
of the supposed form. 
Then
\beas
\left(\mT^1 \dm \mT^2 \right)_t(T) &=& \eef{\int_t^T\,d\angl{\mT^1,\mT^2}_u}\\
&=&\eef{\int_t^T \int_u^T \int_u^T\,\,h^1(T-s) \,h^2(T-r) \, \,ds \, dr \,d\angl{\xi(s),\xi(r)}_u} \\
&=&\eef{\int_t^T\,v_u \,\kappa(s-u)\,\kappa(r-u)\,du\,\int_u^T\,h^1(T-s)\,ds\,\int_u^T\,\,h^2(T-s)\,dr\,}\\
&=&\int_t^T\,\xi_t(u)\,h^{12} (T-u) \,du,
\eeas
and the induction step is completed upon setting 
\[
h^{12}(T-u)  = \int_u^T\,h^1(T-s)\,\kappa(T-s)\,ds\,\int_u^T\,h^2(T-r)\,\kappa(T-r)\,dr.
\]
\end{proof}

\begin{remark} 
The statement and proof of Lemma \ref{lem:AffineTrees} may be extended to the non time-homogeneous setting 
$ d_t \xi_t(u) = \kappa(u,t)\,\sqrt{v_t}\,dW_t $ without much extra effort.
\end{remark}

\begin{example}[Classical Heston]
In this case,
\[
d\xi_t(u) = \nu\,e^{-\lambda\,(u-t)}\,\sqrt{v_t}\,dW_t.
\]
Then, for example,
\beas
\AAdAd =(X \dm (X \dm X))_t(T) &=&\frac{\rho\,\nu}{\lambda}\,\int_t^T\,\xi_t(u)\,\left[1-e^{-\lambda(T-u)}\right]\,du.
\eeas

\end{example}

\begin{example}[Rough Heston]
In this case, with $\alpha  = H + 1/2 \in ( 1/2, 1)$, 
\[
d\xi_t(u) = \frac{\nu}{\Gamma(\alpha)}\,(u-t)^{\alpha-1}\,\sqrt{v_t}\,dW_t.
\]
Then, for example,
\beas
\AAd = (X \dm X)_t(T) &=&\int_t^T\,\xi_t(u)\,du,
\\
\AAdAAdd = ((X \dm X) \dm (X \dm X))_t(T) &=& \frac{\nu^2}{\Gamma(\alpha)^2}\,\int_t^T\,\xi_t(u)\,du\,\left(\int_u^T\,(s-u)^{\alpha-1}\,ds\right)^2\\
&=& \frac{\nu^2}{\Gamma(1+\alpha)^2}\,\int_t^T\,\xi_t(u)\,(T-u)^{2\,\alpha}\,du.\\
\eeas
For a bounded forward variance curve $\xi$ one then sees that diamond trees with $k$ leaves are of order $(T-t)^{1+(k-2) \alpha}$.
In this case, the $\tmF$-expansion (forest reordering according to number of leaves) has the interpretation of a short-time expansion, the concrete powers of which depend on the roughness parameter $\alpha = H + 1/2 \in (1/2,1)$. The resulting diamond expansions (which can obtained by alternative methods in the rough Heston case)  were seen to be numerically efficient in \cite{callegaro2020rough, gatheral2019rational}.
\end{example}
At this stage it is tempting to combine Lemma \ref{lem:AffineTrees} with Theorem \ref{thm:TripleJointMGF} to compute the triple-joint mgf of $X_T$, $\angl{X}_{t,T}$, and $\zeta_T(T)$ by summing the full $\mG$-expansion for an affine forward variance model.  We then see that the mgf is necessarily of the convolutional form
\[
\log \eef{e^{a\,X_T + b\,\angl{X}_{T}+ c\,\zeta_T(T)}} =a\,X_t + b\,\angl{X}_{t} + c\,\zeta_t(T)+ \int_t^T \xi_t(u) \,g(T-u;a,b,c, \Delta) du,
\]
which amounts to an infinite-dimensional version of the classical affine ansatz.  
Inserting
$ \Lambda_{t} (T) = \int_t^T \xi_t(u) g(T-u;a,b,c, \Delta) du $
directly into the ``abstract Riccati'' equation \eqref{eq:roughRic}, we 
%
 readily obtain that the triple-joint MGF satisfies a convolution Riccati equation of the type considered in \cite{jaber2019affine, gatheral2019affine}.
We summarize this in the following theorem.
\begin{theorem}\label{thm:triplejointMGF}
Let
\beas
dX_t &=& -\tfrac 12 \,v_t\,dt + \sqrt{v_t}\,dZ_t\\
d\xi_t(T) &=& \kappa(T-t)\,\sqrt{v_t}\,dW_t,
\eeas
with $d\angl{W,Z}_t=\rho\,dt$ and let $\angl{X}_{t,T} = \angl{X}_T-\angl{X}_t$.  Further let $\tau = T-t$,
$
\bar \kappa(\tau) = \int_\tau^{\tau+\Delta}\,\kappa(u)\,du,
$
and define the convolution integral
$$
(\kappa \star g) (\tau) = \int_0^\tau\,\kappa(\tau - s)\,g(s)\,ds.
$$
Then
\[
\eef{e^{a\,X_T + b\,\angl{X}_{t,T}+ c\,\zeta_T(T)}} =\exp\left\{a\,X_t +c\,\zeta_t(T)+ (\xi \star g) (T-t;a,b,c,\Delta)\right\}
\]
where $g(\tau;a,b,c,\Delta)$ satisfies the convolution Riccati integral equation
\beq
g(\tau;a,b,c,\Delta)=b -\tfrac12 a + \tfrac12\,(1-\rho^2)\,a^2+ \tfrac12\,\left[\rho a + c \, \bar \kappa(\tau) + (\kappa \star g)(\tau;a,b,c,\Delta)\right]^2,
\label{eq:jointRiccati}
\eeq
with the boundary condition
$
g(0;a,b,c,\Delta)= b+\tfrac 12\,a(a-1) +\rho a c\,\bar \kappa (0)+ \tfrac12\,c^2\,\bar \kappa(0)^2
$.

\end{theorem}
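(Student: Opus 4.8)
The plan is to pour the affine structure of Lemma~\ref{lem:AffineTrees} into the ``abstract Riccati'' identity \eqref{eq:roughRic} produced in the proof of Theorem~\ref{thm:TripleJointMGF}. As a preliminary I would absorb the (purely cosmetic) change from $\angl{X}_T$ to $\angl{X}_{t,T}$: since $\angl{X}_t$ is $\cF_t$-measurable, $\eef{e^{a X_T + b \angl{X}_{t,T} + c \zeta_T(T)}} = e^{-b \angl{X}_t}\,\eef{e^{a X_T + b \angl{X}_T + c \zeta_T(T)}}$, so Theorem~\ref{thm:TripleJointMGF} (valid for $a,b,c$ small enough) rewrites the left-hand side of the claim as $\exp\{a X_t + c\,\zeta_t(T) + \Lambda_t^T\}$, where $\Lambda = \sum_{k\ge2}\mG^k$ is the a.s.\ absolutely convergent $\mG$-sum and solves \eqref{eq:roughRic}. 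By Lemma~\ref{lem:AffineTrees}, in the affine forward variance model \eqref{eq:AFV} every diamond tree, hence every $\mG^k$ and the sum $\Lambda$, has the convolutional form $\Lambda_s^T = \int_s^T \xi_s(u)\,g(T-u)\,du = (\xi\star g)(T-s)$; the task reduces to identifying the profile $g=g(\cdot;a,b,c,\Delta)$.

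The heart of the computation is the martingale part of $s\mapsto\Lambda_s^T$. Differentiating the ansatz in $s$, the moving lower endpoint contributes only bounded variation, whereas $d_s\xi_s(u)=\kappa(u-s)\sqrt{v_s}\,dW_s$ yields, after the substitution $w=u-s$, the martingale part $\sqrt{v_s}\,(\kappa\star g)(T-s)\,dW_s$. In model \eqref{eq:AFV} the martingale $Y$ has martingale part $\sqrt{v_s}\,dZ_s$ (so $\angl{Y}=\angl{X}=\int v$), $\zeta$ has martingale part $\bar\kappa(T-s)\sqrt{v_s}\,dW_s$ by \eqref{eq:AFV2}, and $d\angl{Z,W}_s=\rho\,ds$; taking covariations and conditional expectations makes each of the six diamonds in \eqref{eq:roughRic} appear in the form $\int_t^T\xi_t(s)\,[\,\cdot\,](T-s)\,ds$, with integrand, as a function of the time-to-maturity $\tau$, equal to $1$ for $\AAd$, to $\rho\,\bar\kappa(\tau)$ for $\AZd$, to $\bar\kappa(\tau)^2$ for $\ZZd$, to $\rho\,(\kappa\star g)(\tau)$ for $\A\dm\Lambda$, to $\bar\kappa(\tau)(\kappa\star g)(\tau)$ for $\Z\dm\Lambda$, and to $(\kappa\star g)(\tau)^2$ for $\Lambda\dm\Lambda$. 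Matching the integrands of the two sides of \eqref{eq:roughRic}, collecting terms, and completing the square in $\rho a + c\,\bar\kappa(\tau)+(\kappa\star g)(\tau)$ --- using $\tfrac12 a(a-1)+b = b-\tfrac12 a+\tfrac12 a^2$ and $\tfrac12 a^2-\tfrac12\rho^2 a^2=\tfrac12(1-\rho^2)a^2$ --- reproduces exactly \eqref{eq:jointRiccati}; setting $\tau=0$, where $(\kappa\star g)(0)=0$, gives the boundary value $g(0)=b+\tfrac12 a(a-1)+\rho a c\,\bar\kappa(0)+\tfrac12 c^2\bar\kappa(0)^2$.

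I expect the subtle point to be making the ``match the integrands'' step rigorous: passing from the termwise convolutional representation of Lemma~\ref{lem:AffineTrees} together with the convergence $\sum_k\mG^k=\Lambda$ of Theorem~\ref{thm:TripleJointMGF} to a genuine closed equation for the limit requires interchanging the summation with the diamond and convolution operations, and an argument that $\xi_t(\cdot)|_{[t,T]}$ is ``rich enough'' to force the pointwise identity for $g$. The clean route is to run the argument backwards: first show that \eqref{eq:jointRiccati} has, for $|a|,|b|,|c|$ small, a unique continuous solution $g$ on $[0,T-t]$ by a Banach fixed point argument (the nonlinearity is quadratic and $g\mapsto\kappa\star g$ is bounded and Lipschitz on $C[0,T-t]$ as $\kappa\in L^2$); then verify, reversing the computation above, that $\Lambda:=(\xi\star g)(T-\cdot)$ satisfies the abstract Riccati identity \eqref{key-equ}; and finally invoke Theorem~\ref{thm:TripleJointMGF}, which identifies the a.s.\ convergent $\mG$-sum as the solution of \eqref{key-equ}, to conclude $\Lambda=\sum_k\mG^k$ and hence the asserted formula. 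The remaining items --- integrability of $g$ so that $\xi\star g,\kappa\star g$ are well defined (immediate from boundedness of $g$, $\kappa\in L^2$, $\xi$ bounded), uniqueness for \eqref{key-equ} in the relevant class (from the same contraction), and inheritance of the smallness of $a,b,c$ from Theorems~\ref{thm:mainintroG} and \ref{thm:TripleJointMGF} --- are routine.
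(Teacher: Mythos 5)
Your proposal is correct and follows essentially the same route as the paper: use Lemma \ref{lem:AffineTrees} together with Theorem \ref{thm:TripleJointMGF} to justify the convolutional ansatz $\Lambda_t=\int_t^T\xi_t(u)\,g(T-u)\,du$, compute its martingale part and the six covariations in the affine model, insert into the abstract Riccati identity \eqref{eq:roughRic}, and match integrands to obtain \eqref{eq:jointRiccati} with the stated boundary value; your resulting coefficients and the completion of the square agree with the paper's. The only difference is that you flag and sketch how to close the rigor gaps (interchange of the $\mG$-summation with the convolutional representation, and a fixed-point/uniqueness ``verification'' argument), which the paper passes over by inserting the ansatz directly.
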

\begin{proof}
From \eqref{eq:AFV2},
$
d\zeta_t(T) 
= \sqrt{v_t}\,dW_t\,{\bar \kappa}(T-t)
$.
As before, let 
$ \Lambda_{t}  = \int_t^T \xi_t(u) g(T-u;a,b,c, \Delta) du $.  
Then dropping the arguments $a,b,c,\Delta$ for ease of notation,
\beas
d\Lambda_t&=& -\xi_t(t)\, g(T-t)\,dt + \int_t^{T}\,d\xi_t(s)\, g(T-s)\,ds\\
&=& -v_t\, g(T-t)\,dt + \sqrt{v_t}\,dW_t\,\int_t^{T}\,\kappa(s-t)\, g(T-s)\,ds\\
&=& -v_t\, g(T-t)\,dt + \sqrt{v_t}\,dW_t\,(\kappa \star g)(T-t).
\eeas
We compute
\beas
d\angl{X}_t
&=& v_t\,dt\\
d\angl{X,\zeta}_t &=& \rho\,v_t\, \bar \kappa(T-t)\,dt\\
d\angl{\zeta}_t &=&  v_t\,\bar \kappa(T-t)^2\,dt\\
d\angl{X,\Lambda}_t &=&  \rho\,v_t\,(\kappa\star g)(T-t)\, dt \\
d\angl{\Lambda}_t &=& v_t\, \left[ (\kappa\star g)(T-t) \right]^2\,dt\\
d\angl{\zeta,\Lambda}_t &=& v_t\, \bar \kappa(T-t)\,(\kappa\star g)(T-t)\,dt.
\eeas
Integrating these terms from $t$ to $T$, followed by taking a time-$t$ conditional expectation allows us to compute all diamond products in the ``abstract Riccati'' equation \eqref{eq:roughRic}
\[
\Lambda  = \left(  \tfrac 12 a(a-1) + b\right) \AAd 
+ a c\,\AZd + a\,\A \dm \Lambda +  \tfrac{1}{2} \,\left[c\,\Z +  \Lambda \right]^{\dm 2}
\]
to yield
\[
g(\tau)=\left(  \tfrac 12 a(a-1) + b\right) + a\,\rho\,\bar \kappa(\tau) +a\,\rho\, (\kappa \star g)(\tau)
+\tfrac12\,
\left[ c \, \bar \kappa(\tau) + (\kappa \star g)(\tau)\right]^2,
\]
which upon rearrangement gives \eqref{eq:jointRiccati}.
Finally, $(\kappa \star g)(0) =0$ gives the boundary condition
\beas
g(0) = b+\tfrac 12\,a(a-1) +\rho a c\,\bar \kappa (0)+ \tfrac12\,c^2\,\bar \kappa(0)^2.
\eeas

\end{proof}

\bibliographystyle{alpha}
\bibliography{Cumulants}

\appendix

\end{document}